\theoremstyle{plain}
\newtheorem{theorem}{Theorem}[section]
\newtheorem{lemma}[theorem]{Lemma}
\theoremstyle{definition}
\newtheorem{definition}[theorem]{Definition}
\DeclareMathOperator*{\PP}{P}
\DeclareMathOperator*{\EE}{E}
\DeclareMathOperator*{\Var}{Var}
\DeclareMathOperator*{\Cov}{Cov}
\DeclareMathOperator*{\supp}{supp}
\DeclareMathOperator{\Inf}{Inf}
\DeclareMathOperator{\sgn}{sgn}
\newcommand{\cP}{\mathcal{P}}
\newcommand{\bbN}{\mathbb{N}}
\newcommand{\bbR}{\mathbb{R}}
\newcommand{\ignore}[1]{}
\medskip\begin{mdframed}\setlength{\parindent}{0cm}}%
\newcommand{\third}{{\frac{1}{3}}}
\newcommand{\half}{{\frac{1}{2}}}
\definecolor{DSgray}{cmyk}{0,0,0,0.7}
\definecolor{DSred}{cmyk}{0,0.7,0,0.7}
\newcommand{\eps}{{\varepsilon}}
\title{Gaussian Bounds for Noise Correlation of Resilient Functions }
\date{}
\author{Elchanan Mossel\thanks{MIT, Cambridge MA, USA.
E-mail: {\tt{elmos@mit.edu}}.
Partially supported by NSF Grant CCF 1665252 and DOD ONR grant N00014-17-1-2598}}
\begin{document}

\maketitle

\begin{abstract}
Gaussian bounds on noise correlation of functions play an important role in hardness of approximation, in quantitative social choice theory and in testing. 
The author (2008) obtained sharp gaussian bounds for the expected correlation of $\ell$ low influence functions $f^{(1)},\ldots, f^{(\ell)} : \Omega^n \to [0,1]$, where the inputs to the functions are correlated via the $n$-fold tensor of distribution $\mathcal{P}$ on $\Omega^{\ell}$. 
%The result that ``Majority Is Stablest", 
%proven with O'Donnell and Oleszkiewicz (2005), states that, for any two Boolean function $f,g$, of maximal influence
%$\tau$, their noisy inner product is bounded by the noisy inner product of Gaussian half-spaces of the same measures up to an error $\eps(\tau)$, where $\eps(\tau) \to 0$ as $\tau \to 0$.  This result was extended by the author (2010) to a obtain sharp Gaussian bounds for the expected correlation of $\ell$ low influence functions $f^{(1)},\ldots, f^{(\ell)} : \Omega^n \to [0,1]$, where the inputs to the functions are correlated via the $n$-fold tensor of distribution $\mathcal{P}$ on $\Omega^{\ell}$. 

 It is natural to ask if the condition of low influences can be relaxed to the condition that the function has vanishing Fourier coefficients. Here we answer this question affirmatively. For the case of two functions $f$ and $g$, we further show that if $f,g$ 
 have a noisy inner product that exceeds the gaussian bound, then the Fourier supports of their large coefficients intersect. 

%The case of one function, of our result was proven earlier independently in an unpublished work by O'Donnell, Servedio, Tan and Wan (2010).  
%Their proof is included in a recent exposition by Jones (2016). 
 %In fact, we prove a stronger statement showing that for any pair of  Boolean function $f,g : \{0,1\}^n \to [0,1]$ with $\EE[f] = 1/2$ $\eps > 0$ more stable than the asymptotic stability of a balanced majority has correlation at least $\alpha(\eps) > 0$ with a Boolean function $g$ which depends on a bounded number, $r(\epsilon)$, of variables.   
\end{abstract} 

\newpage

\section{Introduction}
Gaussian bounds on noise correlation of functions play an important role in hardness of approximation and quantitative social choice theory. A key example is the ``Majority is Stablest Theorem" proven~\cite{MoOdOl:05,MoOdOl:10}. 
Much more general bounds were obtained in~\cite{Mossel:08,Mossel:10}. The gaussian bounds of~~\cite{Mossel:08,Mossel:10}   are crucial ingredients in many hardness of approximation proofs, including~\cite{Raghavendra:08,GuMaRa:08,Chan:16,FeGuRaWu:12,Austrin:10,BansalKhot:10,GHMRC:11,AustrinHastad:11,AustrinHastad:12,GuRaSaWu:12}. 
They also play an important role in quantitative social choice, e.g.~\cite{IsakssonMossel:12,Mossel:12,FKKN:11}, in property testing, see e.g.~\cite{Blais:09,ChSeTa:14,ChSeTa:15}, and in mathematical analysis, 
see e.g.~\cite{NoGiGe:10,IsakssonMossel:12,Rossignol:16}. 

The gaussian bounds of~\cite{Mossel:08,Mossel:10} require that the functions have low influences. It is natural to ask 
if the low influence condition can be replaced by the weaker condition that all of the Fourier coefficients of $f$ are small.
Here we provide an affirmative answer to this question. 
%(see also~\cite{Filmus_etal:14}). Here we provide an affirmative answer to this question. 

In the reminder of the introduction we provide an informal statement of the main result of~\cite{Mossel:08,Mossel:10} followed by an informal statement of the main results of the current work. We then provide a simple example, discuss the motivation for this work, proof ideas and related work. 

We first recall the definition of influences. To simplify notation we will often omit the sigma algebra and probability measure defined over a probability space $\Omega$. For simplicity, in this paper 
we will only consider either random variables taking finite number of values or Gaussian random variables. 
\begin{definition}
Consider a probability space $\Omega$. 
For a function $f : \Omega^n \to \bbR$, we define the $i$'th influence of $f$ as 
\[
I_i(f) = \EE \big[\Var[f | x_1,\ldots,x_{i-1},x_{i+1},\ldots,x_n] \big],
%= \sum_{S : i \in S} \hat{f}^2(S). 
\]
where the expected value is with respect to the product measure on $\Omega^n$. 
In the Boolean case with the uniform measure $f : \{-1,1\}^n \to \bbR$, the influence is equivalently defined as 
\[
I_i(f) = \EE \big[\Var[f | x_1,\ldots,x_{i-1},x_{i+1},\ldots,x_n] \big] 
= \sum_{S : i \in S} \hat{f}^2(S). 
\]
\end{definition} 

Our main interest is in strengthening the result of~\cite{Mossel:10} which provide Gaussian bounds for noise correlation of low-influence. We defer the definition of the more general setup to a later section and for now state the main result of~\cite{Mossel:10} informally. 

\begin{theorem}[\cite{Mossel:10} Informal Statement]
\label{thm:invariance_informal}
For every $\eps > 0$, there exists $\tau > 0$ for which the following holds. 
Consider a probability distribution $\mathcal{P}$ over $\Omega^{\ell}$ with $\rho(\mathcal{P}) < 1$. 
Let $\mathcal{G}$ denote a normal distribution over $R^{\ell |\Omega|}$ with the same covariance structure as $\mathcal{P}$. 
Let $f^{(1)},\ldots,f^{(\ell)} : \Omega^n \to [0,1]$ with al influences bounded by $\tau$:
\begin{equation} \label{eq:inf_bd0}
\max_{1 \leq j \leq \ell, 1 \leq i \leq n} I_i(j) \leq \tau.  
\end{equation} 
Then there exist functions $\phi^{(j)} : \bbR^{|\Omega|} \to [0,1]$ with $\EE[\phi^{(j)}] = \EE[f^{(j)}]$ such that 
\[
\EE \left[ \prod_{j=1}^\ell f^{(j)}(\underline{X}^{(j)}) \right] \leq 
  \EE[ \left[ \prod_{j=1}^\ell \phi^{(j)}(\underline{G}^{(j)}) \right] + \epsilon . 
\]
\end{theorem}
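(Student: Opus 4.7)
The plan is to use an invariance principle in the style of Rotar and Lindeberg, generalised to the correlated multi-function setup. The three main moves are (i) smooth and truncate each $f^{(j)}$ to a low-degree multilinear polynomial in an orthonormal basis adapted to the marginal of $\mathcal{P}$; (ii) swap the discrete variables for Gaussians one coordinate at a time; and (iii) round the resulting Gaussian polynomials back to $[0,1]$ to produce the required $\phi^{(j)}$.

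For step (i), I would fix for each $j$ an orthonormal basis $\{\chi_0^{(j)}\equiv 1,\chi_1^{(j)},\ldots\}$ of $L^2(\Omega,\mathcal{P}_j)$, where $\mathcal{P}_j$ is the marginal of $\mathcal{P}$ on the $j$-th coordinate. Each $f^{(j)}$ then has a multilinear expansion $Q^{(j)}$ in the basis functions $\chi_s^{(j)}(X_i^{(j)})$. Apply a small amount of the appropriate noise operator $T_{1-\gamma}$ separately to each $f^{(j)}$, with $\gamma$ chosen small relative to $\eps$ and the spectral gap $1-\rho(\mathcal{P})$: this perturbs the correlation to be bounded by at most $\eps/3$, while guaranteeing that all but $\eps/3$ of the Fourier mass of each $f^{(j)}$ sits on degree at most $d=d(\eps,\gamma)$.

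For step (ii), carry out Lindeberg replacement site-by-site: for each $i\in[n]$, swap the vector $(\chi_s^{(j)}(X_i^{(j)}))_{j,s}$ for a Gaussian vector of matching first two moments (this is where matching covariance with $\mathcal{G}$ enters). A third-order Taylor expansion of $\prod_j Q^{(j)}$ in that block, combined with the degree cap $d$, an $L^\infty$ bound on the basis, and hypercontractivity, gives a per-site error of order $d^{O(1)}\max_j I_i(f^{(j)})^{3/2}$. Summing over $i$ and using $\sum_i I_i(f^{(j)})\le d$ produces a total error of order $d^{O(d)}\sqrt{\tau}$, which is at most $\eps/3$ once $\tau=\tau(\eps,\rho(\mathcal{P}),|\Omega|,\ell)$ is small enough.

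After step (ii), the expectation $\EE[\prod_j Q^{(j)}(\underline{G}^{(j)})]$ agrees with the left-hand side up to $\eps$. For step (iii), set $\phi^{(j)}(g):=\max(0,\min(1,Q^{(j)}(g)))$; applying invariance to the $1$-Lipschitz maps $x\mapsto(x-1)_{+}$ and $x\mapsto(-x)_{+}$ shows that the truncation cost $\EE|\phi^{(j)}-Q^{(j)}|$ is $o_\tau(1)$, and an additive shift then enforces $\EE[\phi^{(j)}]=\EE[f^{(j)}]$ at a further $o(1)$ cost, paid for by the slack in $\eps$. The step I expect to be the main obstacle is the quantitative control in (ii): the $L^\infty$ bounds on the basis depend badly on $|\Omega|$, so extracting a clean dependence $\tau=\tau(\eps,\rho(\mathcal{P}),|\Omega|,\ell)$ requires careful bookkeeping of the Taylor remainder together with a hypercontractive estimate to control how the low-degree product $\prod_j Q^{(j)}$ grows under the swap.
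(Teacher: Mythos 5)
This statement is quoted from \cite{Mossel:10}; the present paper does not prove it but cites the formal version as Theorem~\ref{thm:invariance} and builds on it. Your sketch—smooth with $T_{1-\gamma}$ using $\rho(\mathcal{P})<1$ to preserve the correlated expectation and concentrate the spectrum on low degree, then do a coordinate-by-coordinate Lindeberg/Rotar swap with a hypercontractive third-order remainder bound, then round the Gaussian polynomials back to $[0,1]$ and correct the means—is indeed the standard invariance-principle argument of \cite{MoOdOl:10,Mossel:10} and matches the proof strategy of the cited work, so there is no disagreement to report.
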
 
In the equation above: 
\begin{itemize}
\item 
$\EE[f^{(j)}]$ is taken with respect to the $j$'th marginal measure of $\mathcal{P}$ to the $n$-th power. 
\item 
$\EE[\phi^{(j)}]$ it taken with respect to the $j$'th marginal measure of $\mathcal{G}$ to the $n$-th power.
\item $\EE \left[ \prod_{j=1}^\ell f^{(j)}(\underline{X}^{(j)}) \right]$ is taken with respect to the following measure, where 
for each $i$, the vector $(\underline{X_i}^{(j)})_{j=1}^{\ell}$ is distributed according to $\mathcal{P}$ independently. 
\item $\EE \left[ \prod_{j=1}^\ell \phi^{(j)}(\underline{G}^{(j)}) \right]$ is taken with respect to the measure where 
for each $i$, the vector $(\underline{G_i}^{(j)})_{j=1}^{\ell}$ is distributed according to $\mathcal{G}$ independently. 
\item The condition $\rho(\mathcal{P}) < 1$ is natural spectral-gap condition on the distribution $\mathcal{P}$.
\end{itemize}  

As mentioned earlier, Theorem~\ref{thm:invariance_informal} has many applications in many areas including in particular, hardness of approximation and social choice theory. A weaker condition than small Fourier coefficients is to require that $f$ is resilient. For a set $S \subset [n]$ and a vector $(X_i : i \in [n])$, we write $X_S = (X_i : i \in S)$. 

\begin{definition}
We say that a function $f : \Omega^n \to \bbR$ is $(r,\alpha)$-{\em resilient} if 
\begin{align} \label{eq:cond_res_intro}
\Big| \EE \left[f  | X_S = z \right] - \EE[f] \Big| \leq \alpha. 
\end{align}
for all $j$, all sets $S$ with $|S| \leq r$ and all $z$.  
%In the case of Boolean functions under the uniform measure the condition above is : 
%\[
%\max (|\hat{f}(S)| : 0 < |S| \leq r) \leq 2^{-r} \alpha.
%\]
%We say that two functions $f,g : \{-1,1\}^n \to \bbR$ are $(r,\alpha)$ {\em cross-resilient} if 
%\[
%\max \Big(\min(|\hat{f}(S)|,|\hat{g}(S)|)  : 0 < |S| \leq r \Big) \leq \alpha.
%\]
%(We note that in the case of Boolean functions under the uniform measure, if $
%\max (|\hat{f}(S)| : 0 < |S| \leq r) \leq 2^{-r} \alpha.$
then $f$ $(r,\alpha)$-resilient.)
\end{definition}
We note that in the case of Boolean functions under the uniform measure, if 
\[
\max (|\hat{f}(S)| : 0 < |S| \leq r) \leq 2^{-r} \alpha.
\]
then $f$ $(r,\alpha)$-resilient. 
 
%Note that if $f$ is $(r,\alpha)$ resilient that $T_{\rho} f$ is also $(r,\alpha)$ resilient. 
% and similarly for a pair of functions that are cross resilient. 

\subsection{Main Results}
In our main result we provide an affirmative answer to the question above by proving:

\begin{theorem} \label{thm:main_informal}
(Informal statement): The conclusion of Theorem~\ref{thm:invariance_informal} holds if instead of assuming that the functions have low influences (\ref{eq:inf_bd0})  
we assume that all functions are $(r(\eps),\alpha(\eps))$-resilient for some $r(\eps) \in \bbN$ and $\alpha(\eps) > 0$. 
\end{theorem}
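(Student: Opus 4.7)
The plan is to reduce Theorem~\ref{thm:main_informal} to Theorem~\ref{thm:invariance_informal} via a regularity-style conditioning argument: identify a bounded set of ``heavy'' coordinates, apply the existing low-influence theorem on their complement, and control the resulting conditioning using the resilience hypothesis.

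First I would smooth each $f^{(j)}$ by replacing it with $\tilde f^{(j)} := T_{1-\gamma} f^{(j)}$ for a small $\gamma = \gamma(\eps) > 0$. This perturbs both sides of the desired inequality by only $O(\ell\gamma)$, while ensuring $\sum_i I_i(\tilde f^{(j)}) \leq C/\gamma$ for each $j$. Setting
\[
H := \{i \in [n] : \max_{j} I_i(\tilde f^{(j)}) > \tau(\eps)\},
\]
with $\tau$ given by Theorem~\ref{thm:invariance_informal}, bounds $|H|$ by a constant $K = K(\eps) := \ell C/(\gamma\tau)$ that depends only on $\eps$. Choose $r(\eps) \geq K$ and $\alpha(\eps)$ small enough (see below).

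Next, for each joint heavy-coordinate assignment $z \in \Omega^{\ell\times|H|}$ drawn from $\mathcal{P}^{\otimes H}$, the slice functions $g^{(j)}_z(y) := \tilde f^{(j)}(z^{(j)}, y)$ are $[0,1]$-valued on $\Omega^{[n]\setminus H}$ and have every influence at most $\tau$. Applying Theorem~\ref{thm:invariance_informal} to the $\ell$-tuple $(g^{(j)}_z)_j$ yields $\phi^{(j)}_z$ with $\EE[\phi^{(j)}_z] = \EE[g^{(j)}_z]$ satisfying
\[
\EE\!\left[\prod_j g^{(j)}_z\bigl(\underline{X}^{(j)}_{\bar H}\bigr)\right] \leq \EE\!\left[\prod_j \phi^{(j)}_z\bigl(\underline{G}^{(j)}_{\bar H}\bigr)\right] + \eps/3.
\]
Averaging over $z$ produces a ``mixed'' upper bound in which the heavy block remains discretely distributed under $\mathcal{P}^{\otimes H}$ and the complement is Gaussian. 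Resilience at level $r \geq K$, together with the standard Fourier extraction $|\hat f^{(j)}(T)| \leq \alpha$ for $0 < |T| \leq r$, controls the $z$-dependence of $\EE[g^{(j)}_z]$ by $O(2^{K} \alpha)$; taking $\alpha(\eps)$ so that $2^{K}\alpha < \eps/(3\ell)$ pins every conditional mean to within $\eps/(3\ell)$ of $\EE[f^{(j)}]$.

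The main obstacle I anticipate is the last step: converting the mixed expectation into a fully Gaussian one with a single $\phi^{(j)}$ whose mean equals $\EE[f^{(j)}]$. Since $H$ is built precisely out of high-influence coordinates, a naive Lindeberg swap from $\mathcal{P}^{\otimes H}$ to $\mathcal{G}^{\otimes H}$ carries $\Omega(1)$ error per coordinate and $\Omega(K)$ error overall---too large to absorb into $\eps$. The resolution must exploit the fact that resilience bounds every low-degree Fourier coefficient of $f^{(j)}$ (not merely the influences), which in turn forces the whole map $z \mapsto \phi^{(j)}_z$---not just its mean---to be uniformly $O(\alpha)$-close to a constant. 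I would then define $\phi^{(j)}$ as the Hermite-series extension in $z$ of $\phi^{(j)}_z$ (suitably truncated to $[0,1]$), and show that the swap $\mathcal{P}^{\otimes H} \to \mathcal{G}^{\otimes H}$ incurs error only $O(2^{K}\alpha)$. Making this swap precise in the general (non-Boolean, non-uniform) setting---where ``low-degree'' refers to the tensor basis of $\mathcal{P}$ and $\mathcal{G}$ and ``Hermite extension'' must be replaced by the appropriate orthonormal extension---will be the technical heart of the proof.
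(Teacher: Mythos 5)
Your overall plan (smooth, localize the high-influence coordinates, apply the low-influence theorem on the complement, and use resilience to pin the conditional means) is the right high-level strategy and matches the paper's in spirit, but two concrete steps are off.

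First, the heavy set cannot be chosen nonadaptively. You define $H=\{i:\max_j I_i(\tilde f^{(j)})>\tau\}$ and then assert that for every assignment $z$ to the $H$-block the slice functions $g^{(j)}_z$ on $\Omega^{[n]\setminus H}$ have every influence at most $\tau$. This is false: for $i\notin H$ one only has $\EE_z[I_i(g^{(j)}_z)]=I_i(\tilde f^{(j)})\le\tau$, and restriction can inflate the conditional influence on a nonvanishing fraction of $z$. The paper replaces the fixed set $H$ with an adaptive \emph{correlated decision tree} (Lemma~\ref{lem:dec2}): at each node one queries, in all $\ell$ functions simultaneously, a coordinate that is currently of high influence, and a potential/Markov argument bounds the depth and shows all but an $O(\eps)$-fraction of leaves are low-influence in every coordinate. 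Some version of this adaptivity is essential; a static $H$ does not suffice.

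Second, the ``main obstacle'' you flag --- gluing the $\phi^{(j)}_z$ across $z$ and swapping the heavy block from $\mathcal{P}^{\otimes H}$ to $\mathcal{G}^{\otimes H}$ --- is a red herring, and the fix you sketch does not go through. Theorem~\ref{thm:invariance} hands you, at each leaf $z$, \emph{some} Gaussian functions $\phi^{(j)}_z$ with the prescribed means; it gives no control on how $\phi^{(j)}_z$ depends on $z$ beyond the mean, so resilience does not force $z\mapsto\phi^{(j)}_z$ to be close to a constant \emph{function}, and there is no reason the proposed orthonormal-extension-plus-swap incurs only $O(2^K\alpha)$ error. The paper avoids the issue entirely by working with the functional $\Gamma_{\mathcal{G}}(\mu)=\sup_{\Phi}\{\langle\Phi\rangle_{\mathcal{G}}:\EE[\Phi]=\mu\}$ and its trivial Lipschitz bound (\ref{eq:diffGamma}): at each good leaf $z$ one has $\langle F_z\rangle_{\mathcal{P}}\le\Gamma_{\mathcal{G}}(\mu_z)+O(\eps)$ by Theorem~\ref{thm:invariance}; resilience gives $\sum_j|\mu_{z,j}-\mu_j|=O(\eps)$, hence $\Gamma_{\mathcal{G}}(\mu_z)\le\Gamma_{\mathcal{G}}(\mu)+O(\eps)$ by (\ref{eq:diffGamma}); averaging over leaves finishes the proof. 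No gluing of the $\phi^{(j)}_z$ and no discrete-to-Gaussian swap on the heavy block is needed. (A minor further point: in the general product-space setting resilience is stated directly as a bound on $|\EE[f\mid X_S=z]-\EE[f]|$; the ``Fourier coefficient'' reformulation you invoke is specific to the uniform Boolean cube.)
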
 
See Theorem~\ref{thm:multi} for the formal statement with quantitative bounds.  

In the case of two function, it is enough that one of the functions is resilient to obtain the results of Theorem~\ref{thm:main_informal}. 
In fact, as we will see in Section~\ref{sec:intersect}, even when both functions are non resilient the statement holds, as long as their non-resilient variables ``do not intersect".  In particular we prove:

\begin{theorem} \label{thm:three_informal}
For every $\eps > 0, 0 \leq \rho < 1$, there exist $m,\beta> 0$ for which the following holds. 
\begin{itemize}
\item 
Let $x,y$ be $\rho$-correlated vectors. This means that 
 $( (x_i,y_i) : 1 \leq i \leq n)$ are i.i.d. mean $0$ ($\EE[x_i] = \EE[y_i] = 0$) and $\rho$-correlated ($\EE[x_i y_i] = \rho$). 
 \item 
Let $f,g : \{-1,1\}^n \to [0,1]$ be arbitrary functions and 
Let $f'(x) = 1(\sum x_i \geq a)$ and $g'(x) = 1(\sum x_i \geq b)$ with $\EE[f'] = \EE[f](1+o(1)), \EE[g'] = \EE[g](1+o(1))$, be symmetric threshold functions with expectations close to those of $f$ and $g$. 
\item  
Then if the correlation between $f$ and $g$ is $\eps$-larger than those of the corresponding threshold functions:  
\[
\EE[f(x) g(y)] \geq \EE[f'(x) g'(y)] + \eps,  
\]
\item 
Then there exists common structure in their Fourier spectrums: there exist $S, T \subset [n]$ such that 
\[
|S| \leq m, \quad,|T| \leq m, \quad S \cap T \neq \emptyset, \quad \min( |\hat{f}(S), \hat{g}(T) ) > \beta. 
\]
\end{itemize}
\end{theorem}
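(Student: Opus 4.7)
The plan is to proceed by contrapositive: assuming that for every $|S|, |T| \leq m$ with $S \cap T \neq \emptyset$ one has $\min(|\hat f(S)|, |\hat g(T)|) \leq \beta$, I will derive $\EE[f(x) g(y)] \leq \EE[f'(x) g'(y)] + \eps$. Let
\[
A = \{S : |S| \leq m,\ |\hat f(S)| > \beta\},\qquad B = \{T : |T| \leq m,\ |\hat g(T)| > \beta\},
\]
put $U = \bigcup_{S \in A} S$, $V = \bigcup_{T \in B} T$, and $W = [n] \setminus (U \cup V)$. Parseval gives $|A|, |B| \leq 1/\beta^{2}$, so $|U|, |V| \leq m/\beta^{2}$ are bounded in terms of $m, \beta$ only. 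The contrapositive forces $U \cap V = \emptyset$; consequently, for every $|S| \leq m$ with $S \cap V \neq \emptyset$ one has $S \notin A$ and hence $|\hat f(S)| \leq \beta$, and symmetrically $|\hat g(S)| \leq \beta$ whenever $|S| \leq m$ and $S \cap U \neq \emptyset$.

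The next step is to condition on the constant-sized block $(x_U, y_V)$ and reduce the problem to Theorem~\ref{thm:main_informal} on the residual $W$-coordinates. For each fixing $(u, v)$ define the partial averages
\[
F_{u,v}(x_W) := \EE_{x_V \mid y_V = v}[f(u, x_V, x_W)], \qquad G_{u,v}(y_W) := \EE_{y_U \mid x_U = u}[g(y_U, v, y_W)],
\]
both taking values in $[0,1]$. Since $(x_W, y_W)$ is a $\rho$-correlated pair on $W$ independent of $(u, v)$, we have $\EE[fg] = \EE_{(u,v)} \EE_{(x_W, y_W)}[F_{u,v}(x_W) G_{u,v}(y_W)]$. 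A direct Fourier expansion yields, for every $S \subset W$,
\[
\EE_{u,v}\bigl[\hat F_{u,v}(S)^{2}\bigr] = \sum_{T : T \cap W = S} \hat f(T)^{2}\, \rho^{2|T \cap V|},
\]
and the analogous identity for $G_{u,v}$. For every nonempty $S \subset W$ the $T$'s appearing in the sum satisfy $T \not\subset U$, so the low-degree part ($|T| \leq m$) is bounded using $|\hat f(T)| \leq \beta$, while the high-degree tail ($|T| > m$) is damped by $\rho^{2|T \cap V|}$ combined with Parseval.

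From these estimates I would extract, using Markov's inequality together with the global bound $\sum_{S \subset W,\,|S| \leq r} \EE_{u,v}[\hat F_{u,v}(S)^{2}] \leq 1$ (to sidestep a union bound over $S$ that would otherwise cost an $n^{r}$ factor), a set of ``good'' fixings $(u, v)$ of measure at least $1 - \eps/4$ on which both $F_{u,v}$ and $G_{u,v}$ are $(r(\eps/2), \alpha(\eps/2))$-resilient, with $r(\cdot), \alpha(\cdot)$ being the parameters produced by Theorem~\ref{thm:main_informal}. On each such fibre the one-resilient two-function form of that theorem bounds $\EE[F_{u,v}(x_W) G_{u,v}(y_W)]$ by the Gaussian noise correlation at means $\EE[F_{u,v}], \EE[G_{u,v}]$, up to an extra $\eps/4$. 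Averaging over $(u, v)$ --- using $\EE_{u,v}[\EE[F_{u,v}]] = \EE[f]$, $\EE_{u,v}[\EE[G_{u,v}]] = \EE[g]$, together with the identification of the Gaussian noise correlation at those means with $\EE[f'(x) g'(y)]$ up to $o(1)$ --- then delivers $\EE[fg] \leq \EE[f'g'] + \eps$, the required contradiction.

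The main obstacle is the quantitative calibration that makes this argument uniform in $n$. Because $|U|, |V|$ are controlled only by $m/\beta^{2}$, the passage from the $L^{2}$ bound on $\hat F_{u,v}(S)$ to the simultaneous pointwise bounds needed for $(r, \alpha)$-resilience cannot proceed by a naive union bound over all $|S| \leq r$; it must instead exploit the joint bound on $\sum_{S} \hat F_{u,v}(S)^{2}$ together with the $\beta$-smallness of individual $\hat f(T)$. Choosing $m$ large enough that $\rho^{\,m - r - |U|}$ is negligible, and then $\beta$ small enough that the low-degree contribution beats the resilience tolerance imported from Theorem~\ref{thm:main_informal}, is the delicate part; once those parameters are locked in, the conditioning reduction and the final identification of the Gaussian bound with $\EE[f' g']$ are essentially mechanical.
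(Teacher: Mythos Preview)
Your reduction---condition on the union $U\cup V$ of the heavy low-degree supports and then invoke the resilient two-function bound on the residual $W$-coordinates---is the natural first attempt, but the ``quantitative calibration'' you postpone is not merely delicate: it is circular, and the argument cannot be closed as stated.

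Here is the obstruction. With $m,\beta$ fixed, Parseval gives $|A|,|B|\le 1/\beta^{2}$, hence $|U|,|V|\le m/\beta^{2}$. For nonempty $S\subset W$ with $|S|\le r$, every $T$ with $T\cap W=S$ satisfies $|T|\le r+|U|+|V|\le r+2m/\beta^{2}$. Your $\beta$-bound on $\hat f(T)$ is only available when $|T|\le m$, which would force $r+2m/\beta^{2}\le m$, impossible for $\beta<\sqrt 2$. The ``damping by $\rho^{2|T\cap V|}$'' you invoke for the remaining terms is illusory: $|T\cap V|\le|V|$ is bounded and can equal $0$ (take $T\subset S\cup U$), so there is no decay in $|T|$ at all. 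Concretely, nothing in the hypothesis bounds $\hat f(T)$ for $|T|>m$; if such a $T$ happens to have the form $T=\{j\}\cup T_U$ with $j\in W$ and $T_U\subset U$, then $\hat F_{u,v}(\{j\})$ contains the term $\hat f(T)\,u_{T_U}$, of magnitude $|\hat f(T)|$ for every $u$, and $F_{u,v}$ need not be resilient. In short, conditioning on $U$ pulls Fourier mass of $f$ down from degree as high as $r+|U|$ onto degree $\le r$ in $F_{u,v}$, and since $|U|$ grows like $m/\beta^{2}$ as $\beta\to 0$, no consistent choice of $(m,\beta)$ closes the loop.

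The paper breaks this circularity by replacing the static set $U$ with an \emph{adaptive} Fourier decision tree for $f$ (Lemma~\ref{lem:dec_fourier}). At each internal node one branches on a set of size $\le r$ witnessing non-resilience of the current restriction; a variance-drop bookkeeping then bounds the tree depth by $d=O\bigl(r/(\alpha^{2}\eps)\bigr)$, a quantity depending only on the resilience parameters $(r,\alpha)$ imported from Theorem~\ref{thm:two_general} and \emph{not} on $\beta$. One sets $m=d$ and only afterwards chooses $\beta=\Theta(\pi_{\ast}^{m}2^{-m}\eps)$. Cross-resilience at this $(m,\beta)$ now guarantees that every coordinate ever queried by the tree lies in $\supp_{(m,\beta)}(f)$ and hence outside $\supp_{(m,\beta)}(g)$, so $\EE[g\mid y_{I}]\approx\EE[g]$ at every leaf; on the $(1-O(\eps))$-fraction of leaves where the restriction of $f$ is $(r,\alpha)$-resilient one applies Theorem~\ref{thm:two_general}, and the leaves are recombined via Lemma~\ref{lem:Borell}. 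The decisive point is that the depth bound comes from a \emph{variance} budget ($\Var[f]\le 1$), which is what makes it independent of $\beta$.
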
 
See Theorem~\ref{thm:three} for a general statement with quantitative bounds. 
%A similar statement holds for functions $f,g :\Omega^n \to [0,1]$ if each coordinate of $f,g$ is 
%distributed according to $\mathcal{P}$ over $\Omega^{2}$ with $\rho(\mathcal{P}) < 1$. 

\subsection{An Example} 
To illustrate the results, we consider the following example.  
Let $( (X_i, Y_i, Z_i) )_{i=1}^n$ denote i.i.d. two dimensional vectors such that $X_i$ is distributed uniformly on $F_3$, the field of three elements and $Y_i = X_i$ with probability $1/2$ and $Y_i = X_i + 1 \bmod 3$ with probability $1/2$. Let $Z_i = Y_i$ with probability $1/2$ and $Y_i + 1 \bmod 3$ with probability $1/2$. 

Let $f, g, h: F_3^n \to \{0,1\}$ be subsets of $F_3^n$ and let $\mu_f, \mu_g, \mu_h$ denote the probabilities of these sets.
%One can easily compute that $\rho = 0.5$ for the joint distribution of $X,Y$ and $Z$.   
%For this example previous work implies:
\begin{itemize}
\item By~\cite{Mossel:10} there are functions $\Delta_{\pm}(\mu_f, \mu_g, \mu_h)$ which are defined as supremum/infimum of Gaussian processes such that 
if $f,g$ and $h$ all have low influences then 
\begin{equation} \label{eq:example2}
\Delta_{-}(\mu_f, \mu_g, \mu_h) -  \eps \leq \EE[f(X)g(Y)h(Z)]  \leq \Delta_{+}(\mu_f, \mu_g, \mu_h) +  \eps
\end{equation}
where $\eps \to 0$ as the influences diminish. Moreover, both the upper bound and the lower bound are tight. 
\item By~\cite{HaHoMo:16}, 
\begin{equation} \label{eq:example2_2}
\EE[f(X)g(Y)h(Z)] \geq c(\mu_f,\mu_g,\mu_h) - \eps
\end{equation}
whenever $f,g$ and $h$ are resilient for some positive, dimension independent but for a non-optimal $c(\mu_f,\mu_g,\mu_h)$.
Moreover when $f=g=h$, (\ref{eq:example2_2}) holds without any conditions on $f$. 
\item The results of the current paper imply that the tight bound~(\ref{eq:example2}) holds whenever the three functions $f,g$ and $h$ are resilient where $\eps \to 0$ as $f,g,h$ becomes more resilient. 
\end{itemize}

\subsection{Motivation}
From a pure mathematical perspective, we argue that the Fourier transform of a function is a more fundemental object than its influence and thus our results are more natural. More concretely, this paper follows a line of 
work~\cite{Mossel:10,AustrinMossel:13,HaHoMo:16} 
 that tries to bridge between additive combinatorics and the theories of noise stability. One key difference between the two areas is that in additive combinatorics, obstructions to ``typical behavior" are stated in terms of Fourier or generalized Fourier norms, while in the theory of noise stability obstructions are in terms of high influences. 
In~\cite{AustrinMossel:13}, the authors succeeded to relax the influence condition from~\cite{Mossel:10} to Fourier norms for some pairwise independent distributions, while ~\cite{HaHoMo:16} obtained such a relaxation for essentially all distributions considered in the theory of noise stability (where a certain parameter $\rho$ is strictly less than $1$) but 
with lower bounds that are very far from tight. 

Additionally, in many applications, conditions on the Fourier spectrum are more natural than conditions on the influences. 
For example, from the noisy voting perspective, the statement that a function has a high influence variable means that there exists a voter $i$ than can have noticeable effect on the outcome {\em if voter $i$ has access to all other votes casted}. 
The statement that a function is not resilient implies that there is a bounded set of voters who have noticeable effect on the outcome {\em on average}, i.e., with no access to other votes casted. An example of a resilient function with a high influence variable is the function 
\[
f(x) = x_1 \sgn(\sum_{i=2}^n x_i).
\]
Here coordinate $1$ has influence $1$ but the function is resilient. 
In terms of voting, voter $1$ has a lot of power if she has access to all other votes casted (or the majority of the votes) but without access to this information she is powerless. Moreover, for $f$, every set of $k$ voters can change the expected value of $f$ (by conditioning on their vote) by $O(k n^{-1/2})$. 

Similar statements can be made in other voting contexts. For example, as we will show in Section~\ref{sec:arrow}, combining our results and Kalai's analysis~\cite{Kalai:02} implies that in the context of his paper 
(probability of Arrow's paradox among $3$ alternatives, using a balanced functions), if the probability of a paradox is $\eps$ smaller than the asymptotic value achieved by Majority, then at leas two of the functions are correlated with functions of  
$r(\eps)$ coordinates. We leave other social choice implications of our results to future work. 

A more challenging and exciting direction involves the potential application of our result to hardness of approximation. 
One difference between Unique Games Hardness proofs and NP-hardness proofs is that the former often require that passing a ``test" implies that a function has a large Fourier coefficient while for the later, it suffices to show that passing a ``test" implies that a function has a high influence variable. This is perhaps best exemplified by the difference between 
3CSPs~\cite{Hastad:97} and 2CSPs~\cite{Khot:02,KKMO:04}. 
Given our result, it is natural to explore if some Unique Games Hardness results can be transformed into NP-hardness results (we note that this difference in the ``inner verifier" is not the only difference and an equally or perhaps an even more challenging difference has to do with the ``outer verifier"). 

\subsection{Proof Ideas}
The basic idea of the proof is to look at a smoothed version of $f$ and construct a decision tree for it, where almost all leaves have low influences. If $f$ is resilient enough, it is not too hard to show that all leaves will have expectation that is very close to the expected value of $f$ which 
allows to apply the results of~\cite{Mossel:10}. In fact we will consider a number of different decision trees: 
\begin{itemize}
\item The proof of Theorem~\ref{thm:main_informal}  utilizes what we call a ``correlated decision tree", where at each node we ``expand" one variable in all functions simultaneously and where at most of the leaves, all functions have low influence. 
\item The proof of Theorem~\ref{thm:two} uses a decision tree for $g$ only so that for most leaves, $g$ is of low influence, while $f$ has essentially the same expectation as at the root.
\item  The proof of Theorem~\ref{thm:three_informal}  uses an additional decision tree for a general function $f$ such that in most of the leaves of $f$, $f$ is resilient.
\end{itemize}  
The idea of determining the resilience threshold as a function of an influence threshold 
is influenced by a recent work~\cite{HaHoMo:16}, where the same idea was used in the context of set-hitting. 

\subsection{Context and Related Work}

The results of~\cite{Mossel:08,Mossel:10} generalize the Majority is Stablest Theorem~\cite{MoOdOl:05,MoOdOl:10} (MIST). 
MIST was conjectured in the context of hardness of approximation 
by~\cite{KKMO:04,KKMO:07} and (in a special case) by Kalai in his work on Arrow's Theorem~\cite{Kalai:02}. 
The authors of~\cite{Kalai:02,KKMO:07,MoOdOl:10} asked if the 
condition of low influence can be replaced by the weaker condition that all of the Fourier coefficients of $f$ are small 
(see also~\cite{Filmus_etal:14})

After a draft of our paper was posted we learned from Ryan O'Donnell that the single function case of our results, i.e, the MIST case, was proved in an unpublished work by O'Donnell, Servedio, Tan and Wan~\cite{OSTW:10}. The results of ~\cite{OSTW:10} were not widely circulated and the question if Majority Is Stablest under small coefficients appears as an open problem in~\cite{Filmus_etal:14}. 
The argument of~\cite{OSTW:10} is based on a different decision tree than ours. The argument is presented in a paper by Jones~\cite{Jones:16} which was posted before our (independent) work was posted.  The argument presented in~\cite{Jones:16} uses the noise stability as an energy function. It is not clear if one can use this  energy function to obtain  stability bounds involving more than one function. 
%Many further generalizations and applications are discussed in a follow up work~\cite{Mossel:17b}.  

\subsection{Paper structure}
In order to simplify the presentation of the ideas of the paper, we begin with a proof in the Boolean two-function case. Preliminaries are presented in 
Section~\ref{sec:prelim} while the proof of this special case, Theorem~\ref{thm:two}, is presented in Section~\ref{sec:proof}. The proof of Theorem~\ref{thm:main_informal} is provided in Section~\ref{sec:general}. 
In Section~\ref{sec:intersect}, we prove Theorem~\ref{thm:three_informal}. Finally in Section~\ref{sec:arrow} we demonstrate a direct application of our results which establishes a strengthening of the Arrow-Kalai Theorem. 
 
\subsection{Acknowledgment} 
Thanks to Vishesh Jain and Govind Ramnarayan for comments on a draft of this paper. 
Thanks to Ryan O'Donnell for communicating the existence of~\cite{OSTW:10,Jones:16} after the first draft of this paper was posted.

%%%%%%%%%%%%%%%%%%%%%%%%%%%%%
\section{Preliminaries} \label{sec:prelim}
%%%%%%%%%%%%%%%%%%%%%%%%%%%%%%

\subsection{Two Function Version of Majority is Stablest} 
For simplicity we first discuss the original setting of Majority is Stablest (MIST from now on).
 In this case we consider $\{-1,1\}^n$ with the uniform probability measure. 
\begin{definition}
For two functions $f,g : \{-1,1\}^n \to \bbR$ the ($\rho$-) {\em noisy inner product} of $f$ and $g$ denoted by 
$\langle f, g \rangle_{\rho}$ 
is defined by $\EE[f(x) g(y)]$, where 
$( (x_i,y_i) : 1 \leq i \leq n)$ are i.i.d. mean $0$ ($\EE[x_i] = \EE[y_i] = 0$) and $\rho$-correlated ($\EE[x_i y_i] = \rho$). 
The {\em noise stability} of $f$ is its noisy inner product with itself: $\langle f, f \rangle_{\rho}$.
\end{definition} 

We can also write the noisy inner product in terms of the noise operator $T_{\rho}$, 
\[
\langle f, g \rangle_{\rho} = \EE[f(x) g(y)] = \EE[ f T_{\rho} g], \quad T_{\rho} f(x) = \EE[f(y) | x] = \sum_S \rho^s \hat{f}(S) x_S, \quad x_S = \prod_{i \in S} x_i.
\]
Analogous quantities are defined in Gaussian space. 
\begin{definition} 
In Gaussian space, the ($\rho$-) noisy inner product of $\phi : \bbR \to \bbR$ and $\psi : \bbR \to \bbR$ denote by $\langle \phi, \psi \rangle_{\rho}$  is 
\[
\EE[\phi(N) \psi(M)],
\]
where $M,N$ are standard (mean $0$ variance $1$) Gaussian random variables with covariance $\rho$ 
(so $\EE[N M] = \rho$). 
The {\em noise stability} of $\phi$ is its inner product with itself: $\langle \phi, \phi \rangle_{\rho}$.
\end{definition} 
We will generally use $f,g$ etc. to denote functions over the Boolean cube and $\phi,\psi$ etc. for functions over 
the one dimensional Gaussian measure. 
In particular, for $\mu \in [0,1]$ we write $\chi_{\mu}$ for the indicator of the interval $(-\infty,\Phi^{-1}(\mu))$ whose Gaussian measure is
$\mu$. We write $\mu_f$ (respectively $\mu_{\phi}$) for the expected value of $f$ under the uniform measure (respectively $\phi$ under the Gaussian measure). 

In our main result we replace the low-influence condition with the condition that the functions are {\em resilient}. . 
We first specialize to the Boolean case and recall the two functions version of the Majority Is Stablest Theorem (MIST):
\begin{theorem}[\cite{MoOdOl:05,MoOdOl:10}]
For every $\eps > 0, 0 \leq \rho < 1$, there exists a $\tau> 0$ for which the following holds. 
Let $f,g : \{-1,1\}^n \to [0,1]$ satisfying $\max(I_i(f),I_i(g)) < \tau$ for all $i$. 
Then 
\[
\langle f, g \rangle_{\rho} \leq \langle \chi_{\mu_f}, \chi_{\mu_g} \rangle_{\rho}+ \eps. 
\]
\end{theorem}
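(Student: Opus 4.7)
My approach would follow the standard three-step route: smooth, invariance, Borell. First I would replace $f,g$ by the slightly noised versions $\tilde{f}=T_{1-\gamma}f$ and $\tilde{g}=T_{1-\gamma}g$ for a small parameter $\gamma=\gamma(\eps,\rho)>0$. Since $T_{1-\gamma}$ is a contraction that commutes with $T_\rho$ and since $\|f\|_2,\|g\|_2\le 1$, one checks that $|\langle f,g\rangle_\rho-\langle \tilde{f},\tilde{g}\rangle_\rho|$ is $O(\gamma)$ plus a term that vanishes as $\rho<1$ and $\gamma\to 0$ (here one uses that the contribution of Fourier levels $|S|\ge d$ is geometrically small in $\rho$, so high-degree mass is killed). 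Choosing $\gamma$ small, $\tilde{f},\tilde{g}$ have their Fourier mass essentially on level at most $d=d(\eps,\rho,\gamma)$ while all influences remain bounded by $\tau$; meanwhile their truncated low-degree parts $\tilde{f}^{\le d},\tilde{g}^{\le d}$ have uniformly bounded total degree.

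Next I would apply the invariance principle (Mossel--O'Donnell--Oleszkiewicz) to replace $\pm 1$ inputs by correlated standard Gaussians $(G_i,H_i)$ with $\EE[G_iH_i]=\rho$. This produces multilinear polynomials $P,Q$ in Gaussian variables whose joint distribution satisfies
\[
\bigl|\EE[\tilde f^{\le d}(x)\tilde g^{\le d}(y)]-\EE[P(G)Q(H)]\bigr|\ \le\ C(d,\rho)\cdot\tau^{\Omega(1)}.
\]
Because the CDF of $P$ (resp.\ $Q$) is close to that of $\tilde{f}^{\le d}$ (resp.\ $\tilde{g}^{\le d}$) in L\'evy distance, rounding $P,Q$ back into $[0,1]$ by clipping yields $[0,1]$-valued functions $\phi,\psi$ on Gaussian space with $\EE[\phi]=\mu_f+o(1)$, $\EE[\psi]=\mu_g+o(1)$, and $\EE[\phi(G)\psi(H)]\ge \langle f,g\rangle_\rho-\eps/2$, provided $\tau$ is chosen small enough relative to $\gamma,d,\eps$.

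Finally I would invoke the Gaussian maximizer theorem of Borell: among all pairs of $[0,1]$-valued measurable functions on Gaussian space with prescribed means $\mu,\nu$, the $\rho$-noisy inner product is maximized (for $\rho\ge 0$) by the pair of one-dimensional halfspace indicators $\chi_\mu,\chi_\nu$ oriented in the same direction. Applying this to $\phi,\psi$ and using continuity of $\langle \chi_\mu,\chi_\nu\rangle_\rho$ in $\mu,\nu$ to absorb the $o(1)$ drift in the means gives
\[
\langle f,g\rangle_\rho\ \le\ \langle\chi_{\mu_f},\chi_{\mu_g}\rangle_\rho+\eps,
\]
after choosing $\gamma$, then $d$, then $\tau$ in that order.

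The main obstacle is the invariance step: one must track that the influences of the truncated smoothed versions $\tilde f^{\le d}$ are still controlled by the original $\tau$ (they are, since $T_{1-\gamma}$ and truncation only shrink influences) and that the $L^4$--$L^2$ hypercontractive bounds needed to apply invariance depend on $d$ rather than $n$, so that the error is $C(d,\rho)\tau^{\Omega(1)}$ and not something that grows with $n$. The rest is a careful ordering of the small parameters; Borell's inequality does the real geometric work, and the analytic content lies entirely in making the Boolean-to-Gaussian swap rigorous on low-influence, low-degree functions.
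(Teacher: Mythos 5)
Your sketch is a correct outline of the standard smooth--invariance--Borell argument, which is exactly how the cited works~\cite{MoOdOl:05,MoOdOl:10} establish this theorem. Note, though, that the present paper does not reprove this statement; it is quoted as a known result and used as a black box (the paper's own contribution starts from the related Theorem~\ref{thm:MIST}, which weakens the hypothesis to requiring low influences of only one of $f,g$), so there is no in-paper proof to compare against.
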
 
This theorem is called Majority Is Stablest since
$
\langle \chi_{\mu_f}, \chi_{\mu_g} \rangle_{\rho} = \lim_{n \to \infty} \langle f_n , g_n \rangle_{\rho}, 
$
where $f_n(x) = \chi_{\mu_f}(n^{-1/2} \sum_{i=1}^n x_i)$ and $g_n(x) = \chi_{\mu_g}(n^{-1/2} \sum_{i=1}^n x_i)$. 

We need the following version from \cite{Mossel:10} which only requires one of the functions to have low influences.   
\begin{theorem}[\cite{Mossel:10} Prop 1.15] \label{thm:MIST}
For every $\eps > 0$ and $0 \leq \rho < 1$, there exists a $\tau(\rho,\eps) > 0$ for which the following holds. 
Let $f,g : \{-1,1\}^n \to [0,1]$ be such that $\min(I_i(f),I_i(g)) < \tau$ for all $i$. Then 
\begin{equation} \label{eq:mist}
\langle  f, g \rangle_{\rho} \leq \langle \chi_{\mu_f}, \chi_{\mu_g} \rangle_{\rho} + \eps, 
\end{equation} 
where one can take 
\begin{equation} \label{eq:tau_bound}
\tau = \eps^{O\left( \frac{\log(1/\eps) \log(1/(1-\rho))}{(1-\rho) \eps} \right)}. 
\end{equation} 
In particular the statement above holds when $\max_i I_i(f) < \tau$ and $g$ is {\em any} Boolean function bounded between $0$ and $1$. 
\end{theorem}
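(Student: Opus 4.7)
The plan is to prove this by the standard invariance-principle-plus-Borell template of~\cite{MoOdOl:10,Mossel:10}, with the asymmetric one-sided low-influence hypothesis handled by a coordinate partition combined with a bilinear bookkeeping argument.

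First I would reduce $f,g$ to multilinear polynomials of bounded degree via Bonami--Beckner smoothing: since $\rho<1$, one can factor $T_\rho = T_{\rho/(1-\gamma)}\,T_{1-\gamma}$ and absorb a $T_{1-\gamma}$ smoothing into each function at a cost of $O(\gamma)$ in the noisy inner product. The smoothed functions have essentially all Fourier mass on levels at most $d = O(\log(1/\eps)/\gamma)$, and can be truncated accordingly. For multilinear $f,g$ one has the identity $\EE[f(x)g(y)] = \EE[\tilde f(G)\tilde g(H)]$, where $\tilde f,\tilde g$ are the Hermite/multilinear extensions and $(G,H)$ are $\rho$-correlated standard Gaussians; this is immediate because the bilinear pairing depends only on the first two moments at each coordinate, which match between Boolean and Gaussian with correlation $\rho$.

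Second, partition $[n]=A\sqcup B$ with $I_i(f)\leq\tau$ for $i\in A$ and $I_i(g)\leq\tau$ for $i\in B$ (possible by hypothesis). The goal is to produce $\phi,\psi\colon\bbR^n\to[0,1]$ with $\EE[\phi]\approx\mu_f,\ \EE[\psi]\approx\mu_g$ such that $\EE[\tilde f(G)\tilde g(H)]\leq \EE[\phi(G)\psi(H)]+\eps/2$, after which Borell's Gaussian rearrangement inequality gives $\EE[\phi(G)\psi(H)]\leq \langle\chi_{\mu_\phi},\chi_{\mu_\psi}\rangle_\rho = \langle\chi_{\mu_f},\chi_{\mu_g}\rangle_\rho+o(1)$. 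The natural candidates are the pointwise clips $\phi:=\mathrm{clip}_{[0,1]}(\tilde f(G))$ and $\psi:=\mathrm{clip}_{[0,1]}(\tilde g(H))$. To bound $\|\tilde f-\phi\|_2$ I would apply the MoOdOl invariance principle to $f$ after conditioning on $x_B$: the conditional multilinear polynomial $f(\,\cdot\,,x_B)$, viewed as a function of $x_A$, has $i$-th influence (for $i\in A$) bounded by $I_i(f)\leq\tau$ on average, so by Markov it is at most $\sqrt{\tau}$ for all but $O(\sqrt{\tau})$-fraction of $x_B$. Invariance then gives a clip error $O(C^d\tau^{\delta})$ uniformly on good $x_B$, and symmetrically for $\psi$ on the $B$-side. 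Finally, $\EE[\tilde f\tilde g]-\EE[\phi\psi]$ splits as $\EE[(\tilde f-\phi)\tilde g] + \EE[\phi(\tilde g-\psi)]$; Cauchy--Schwarz bounds the first summand by $\|\tilde f-\phi\|_2\|\tilde g\|_2\leq O(\tau^{\delta})$, and the second summand requires a more delicate argument because $\tilde g$ may be unbounded in $L^\infty$.

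The main obstacle is the second cross-term above. When $g$ has a high-influence coordinate in $A$, $\tilde g(H)$ is far from $[0,1]$ with non-negligible probability, and the naive bound $\|\phi\|_\infty\|\tilde g-\psi\|_1$ fails. The fix uses the bilinear structure: condition on $G$ and use the Ornstein--Uhlenbeck identity $\EE[\tilde g(H)\mid G]=\widetilde{T_\rho g}(G)$, converting $\EE[\phi(G)\tilde g(H)]=\EE[\phi(G)\widetilde{T_\rho g}(G)]$; the smoothing by $T_\rho$ (with $\rho<1$) attenuates the Fourier mass of $g$ on $A$-coordinates where it might have large influence, and together with the low influence of $g$ on $B$, the resulting Hermite extension is close in $L^2$ to a $[0,1]$-valued function. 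Propagating the quantitative bound through this manipulation recovers the stated $\tau$ in~\eqref{eq:tau_bound}, with the exponent $\log(1/\eps)\log(1/(1-\rho))/((1-\rho)\eps)$ reflecting the trade-off between the smoothing parameter $\gamma$, the truncation degree $d$, and the clip error $\tau^{\delta}$.
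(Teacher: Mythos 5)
The paper does not prove this statement; it is imported verbatim from~\cite{Mossel:10} (Proposition 1.15) as a preliminary, so there is no internal proof to compare against. That said, your proposal does not establish the claim, and the gap is precisely where the one-sided hypothesis forces a departure from the symmetric argument. You decouple the bilinear problem into two separate single-function clippings: set $\phi=\mathrm{clip}_{[0,1]}(\tilde f(G))$, $\psi=\mathrm{clip}_{[0,1]}(\tilde g(H))$, split $\EE[\tilde f\tilde g]-\EE[\phi\psi]$ into $\EE[(\tilde f-\phi)\tilde g]+\EE[\phi(\tilde g-\psi)]$, and assert the first term is $O(\tau^{\delta})$ by Cauchy--Schwarz. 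But making $\|\tilde f(G)-\phi\|_2$ small requires \emph{all} influences of $f$ to be small, which the $\min$-hypothesis does not give: $f$ may have influence $\Theta(1)$ on every coordinate of $B$. Concretely, take $f=(1+x_1)/2$ and any $g$ with $I_1(g)<\tau$; the hypothesis holds with $B=\{1\}$, yet $\tilde f(G)=(1+G_1)/2$ leaves $[0,1]$ with constant probability, so $\|\tilde f-\phi\|_2=\Omega(1)$ and your first cross-term is not small. Your proposed remedy---condition on $x_B$ and apply invariance to $f(\cdot,x_B)$ as a function of $x_A$---only shows the \emph{partial} Gaussianization $\tilde f(G_A,x_B)$ (with $x_B$ still Boolean) is near $[0,1]$; it says nothing about the fully Gaussian $\tilde f(G_A,G_B)$ that actually defines $\phi$, because nothing licenses swapping the $B$-coordinates of $f$. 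The second remedy, rewriting $\EE[\phi\,\tilde g]=\EE[\phi\,\widetilde{T_{\rho}g}]$ via the OU semigroup, also does not help: $I_i(T_{\rho}g)\le\rho^{2}I_i(g)$, so $T_{\rho}$ attenuates influences only by the fixed factor $\rho^{2}$, which is nowhere near $\tau$ when $\rho$ is close to $1$, and $\widetilde{T_{\rho}g}$ can still be far from $[0,1]$-valued.

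The route that actually works, and the one behind Proposition 1.15 of~\cite{Mossel:10}, is a genuinely \emph{joint} invariance principle: swap $(X_i,Y_i)\to(G_i,H_i)$ one coordinate at a time and bound $\EE[\Psi(f(X),g(Y))]-\EE[\Psi(\tilde f(G),\tilde g(H))]$ for a smooth bivariate test $\Psi$ approximating $(a,b)\mapsto\mathrm{clip}(a)\,\mathrm{clip}(b)$. The per-coordinate Lindeberg error then involves mixed partials of $\Psi$ multiplied by \emph{products} of the directional derivatives $\partial_i f$ and $\partial_i g$, so the swap at coordinate $i$ is cheap as soon as \emph{either} $I_i(f)$ or $I_i(g)$ is small---exactly where the $\min$ in the hypothesis enters, and something that cannot be reproduced by clipping $\tilde f$ and $\tilde g$ separately. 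A self-contained proof should invoke or reprove that two-function invariance lemma rather than two independent single-function clippings.
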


\subsection{Stability of Half-Spaces}
% Modules of continuiuty with respect to \mu is 2.
% Modules of continuity 
% P[X \leq a, c X + d N \leq a] P[X \leq a, c' X + d' N \leq a]
% P[X \leq a, N \leq d^{-1} (a - c X)
We will use the following standard estimates, see e.g.~\cite[Appendix B]{MoOdOl:10}.
\begin{lemma} \label{lem:gaussian_continuous} 
Assume $\rho < 1$ and $\rho_1 < \rho_2 < 1$ then 
\[
| \langle \chi_{\mu_1}, \chi_{\mu_2} \rangle_{\rho_1} - \langle  \chi_{\mu_1}, \chi_{\mu_2} \rangle_{\rho_2} | 
\leq    \frac{10 (\rho_2 - \rho_1)}{1-\rho_2}
\]

\[
| \langle  \chi_{\mu'_1}, \chi_{\mu'_2} \rangle_{\rho} - \langle  \chi_{\mu_1}, \chi_{\mu_2} \rangle_{\rho} | 
\leq   +  2 |\mu_1-\mu_1'|  + 2 |\mu_2-\mu_2'| 
\]
\end{lemma}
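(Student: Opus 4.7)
Proof proposal for Lemma~\ref{lem:gaussian_continuous}.

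Throughout, write $\Phi_\rho(a,b) = \PP(N \leq a,\ M \leq b)$ for standard bivariate Gaussians $(N,M)$ with correlation $\rho$, and $\varphi_\rho(a,b) = \frac{1}{2\pi\sqrt{1-\rho^2}} \exp\!\big(\!-\frac{a^2 - 2\rho ab + b^2}{2(1-\rho^2)}\big)$ for the joint density. Then $\langle \chi_{\mu_1}, \chi_{\mu_2}\rangle_\rho = \Phi_\rho(\Phi^{-1}(\mu_1), \Phi^{-1}(\mu_2))$. The plan is to prove the two inequalities by standard Gaussian calculus; the first uses Plackett's derivative identity, and the second is a union-bound argument.

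For the first inequality, I would invoke the classical identity $\partial_\rho \Phi_\rho(a,b) = \varphi_\rho(a,b)$ (Plackett's formula), which may be derived either by differentiating the density under the integral sign and using the fact that $\varphi_\rho$ satisfies the heat-type PDE $\partial_\rho \varphi_\rho = \partial_a \partial_b \varphi_\rho$, or alternatively by expanding both sides with Mehler's formula. Using $\varphi_\rho(a,b) \leq \frac{1}{2\pi\sqrt{1-\rho^2}}$ and the fact that, on $[\rho_1,\rho_2]$, $\sqrt{1-\rho^2} = \sqrt{(1-\rho)(1+\rho)} \geq \sqrt{1-\rho_2} \geq (1-\rho_2)$ (the last step since $1-\rho_2 \in [0,1]$), integration yields
\[
|\Phi_{\rho_2}(a,b) - \Phi_{\rho_1}(a,b)| \leq \int_{\rho_1}^{\rho_2} \frac{d\rho}{2\pi\sqrt{1-\rho^2}} \leq \frac{\rho_2 - \rho_1}{2\pi(1-\rho_2)},
\]
which is stronger than the stated bound $\frac{10(\rho_2-\rho_1)}{1-\rho_2}$.

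For the second inequality, fix $\rho$ and set $a = \Phi^{-1}(\mu_1)$, $a' = \Phi^{-1}(\mu'_1)$, $b = \Phi^{-1}(\mu_2)$, $b' = \Phi^{-1}(\mu'_2)$. Apply the triangle inequality
\[
|\Phi_\rho(a',b') - \Phi_\rho(a,b)| \leq |\Phi_\rho(a',b') - \Phi_\rho(a,b')| + |\Phi_\rho(a,b') - \Phi_\rho(a,b)|.
\]
Assuming WLOG $a' \geq a$, the first term equals $\PP(a < N \leq a',\ M \leq b')$, which is at most $\PP(a < N \leq a') = |\Phi(a') - \Phi(a)| = |\mu'_1 - \mu_1|$. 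The second term is bounded by $|\mu'_2 - \mu_2|$ by the same reasoning. Summing gives a bound even tighter than $2|\mu_1-\mu'_1|+2|\mu_2-\mu'_2|$.

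The only non-routine step is Plackett's identity in the first inequality; everything else is elementary. Since the lemma is quoted as a standard estimate from \cite[Appendix B]{MoOdOl:10}, I expect the actual proof to be a short pointer rather than a worked-out argument.
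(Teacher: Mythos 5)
Your proof is correct, and in fact the paper itself gives no argument here---it simply cites~\cite[Appendix~B]{MoOdOl:10} and moves on. Your argument fills in that gap cleanly: Plackett's identity $\partial_\rho \Phi_\rho(a,b)=\varphi_\rho(a,b)$ together with the pointwise bound $\varphi_\rho(a,b)\le \frac{1}{2\pi\sqrt{1-\rho^2}}$ gives the first inequality (with the much sharper constant $\frac{1}{2\pi}$ in place of $10$), and the telescoping/union-bound argument gives the second inequality with the sharper constant $1$ in place of $2$. Both steps are standard Gaussian calculus, and the reduction to $\Phi_\rho$ via $\langle \chi_{\mu_1},\chi_{\mu_2}\rangle_\rho=\Phi_\rho(\Phi^{-1}(\mu_1),\Phi^{-1}(\mu_2))$ is exactly the right normalization.

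One small point worth flagging: in the chain $\sqrt{1-\rho^2}\ge\sqrt{1-\rho_2}\ge 1-\rho_2$ you implicitly use $\rho\ge 0$ twice---once so that $1-\rho^2$ is minimized at $\rho_2$ on $[\rho_1,\rho_2]$ (equivalently $1+\rho\ge 1$), and once so that $1-\rho_2\in[0,1]$. The lemma as written only assumes $\rho_1<\rho_2<1$, and indeed the stated bound is actually \emph{false} for $\rho_1$ near $-1$ (take $a=b=0$: the integrand $\frac{1}{2\pi\sqrt{1-\rho^2}}$ blows up like $(1+\rho)^{-1/2}$, producing a contribution of order $\sqrt{\rho_2-\rho_1}$, not $\rho_2-\rho_1$). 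This is a harmless imprecision in the paper's statement rather than in your proof, since every application in the paper has $0\le\rho_1<\rho_2<1$, but you should state the restriction explicitly rather than leave it implicit in an inequality.
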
 

We will also use the following corollary of Borell's Gaussian noise stability result. 
\begin{lemma}[\cite{Borell:85}] \label{lem:Borell} 
Let $\phi,\psi : \bbR \to [0,1]$ and $\rho \geq 0$ then 
\[
\langle \phi, \psi \rangle_{\rho} \leq \langle \chi_{\mu_\phi}, \chi_{\mu_\psi} \rangle_{\rho}. 
\]
\end{lemma}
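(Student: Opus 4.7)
The statement is Borell's classical Gaussian noise correlation inequality in its two-function form, specialized to one-dimensional Gaussian space. The plan is to reduce to the case of indicators of sets and then invoke a rearrangement / symmetrization argument that identifies lower half-lines as the extremal sets.

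The first step is a reduction to indicators. For fixed $\psi$, the map $\phi \mapsto \langle \phi, \psi \rangle_{\rho} = \EE[\phi(N)\psi(M)]$ is linear, so on the convex set $\{\phi : \bbR \to [0,1], \EE[\phi(N)] = \mu_\phi\}$ the supremum is attained at an extreme point, which must be a $\{0,1\}$-valued function, i.e.\ $\phi = \1_A$ for some measurable $A\subset\bbR$ with $\gamma(A)=\mu_\phi$. Symmetrically for $\psi$. So it suffices to prove the inequality for indicator pairs: for all measurable $A,B\subset\bbR$,
\[
\Pr[N\in A,\, M\in B] \;\leq\; \Pr[N\leq a,\, M\leq b],
\qquad a = \Phi^{-1}(\gamma(A)),\ b=\Phi^{-1}(\gamma(B)).
\]

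The second step is this set-level inequality. I would prove it by a two-sided rearrangement argument that exploits the fact that for $\rho\geq 0$ the bivariate Gaussian density
\[
p_\rho(x,y) \;=\; \frac{1}{2\pi\sqrt{1-\rho^2}}\exp\!\left(-\frac{x^2 - 2\rho xy + y^2}{2(1-\rho^2)}\right)
\]
is TP2 (log-supermodular) in $(x,y)$. Concretely, fix $B$. Since $\Pr[N\in A,\, M\in B] = \int_A h_B(x)\,\varphi(x)\,dx$ with $h_B(x) = \Pr[M\in B\mid N=x]$, the optimal $A$ of fixed Gaussian measure is a super-level set of $h_B$. When $B = (-\infty,b]$, $h_B(x)=\Phi\!\bigl((b-\rho x)/\sqrt{1-\rho^2}\bigr)$ is monotone decreasing in $x$, so the optimal $A$ is a lower half-line; by symmetry, if $A$ is a lower half-line the optimal $B$ is too. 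One then runs an alternating symmetrization / Schwarz-style argument, or applies Ehrhard symmetrization directly, to conclude that the joint optimum is the pair of parallel lower half-lines with measures $\mu_\phi$ and $\mu_\psi$. Combined with Step 1 this is exactly the asserted bound.

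The main obstacle is making the rearrangement step in the second paragraph fully rigorous: a naive alternating maximization need not converge to a pair of half-lines, so one either has to carry out a proper Gaussian symmetrization (Ehrhard's inequality, or the Mossel--Neeman semigroup proof), or use the TP2 property combined with Hardy--Littlewood type rearrangement to show that replacing $A$ and $B$ by their half-line analogs never decreases $\Pr[N\in A, M\in B]$. Since the lemma is attributed to \cite{Borell:85} and only invoked as a black box in what follows, I would simply cite the classical proof rather than reproduce it.
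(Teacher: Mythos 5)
The paper offers no proof of this lemma at all; it simply cites \cite{Borell:85}, which matches your own final recommendation to invoke the classical result as a black box. Your preliminary sketch (reduction to indicators by linearity and extremality on the weak-* compact convex set of $[0,1]$-valued functions of prescribed mean, followed by Gaussian rearrangement via TP2/Ehrhard symmetrization or the semigroup argument) is a reasonable roadmap to the known proof, and you correctly identify the rearrangement step as the nontrivial part that one should not attempt to reprove here.
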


\subsection{Decision Trees}
We will use the following regularity lemma, see e.g.~\cite{MosselSchramm:08,DKN:10}. 
\begin{lemma} \label{lem:dec}
For a function $f$, let $I(f) = \sum I_i(f)$. 
Then for any $\tau > 0, \eps > 0$ and any function $f$ there exists a decision tree for $f$ of depth at most 
\[
d \leq 2+ \frac{I}{\tau \eps} 
\]
such that except for at most a fraction $\eps$ of the leaves (chosen uniformly at random) all the leaf functions have their maximum influence at most 
$\tau$.
\end{lemma}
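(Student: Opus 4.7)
The plan is to build the decision tree greedily: at any node of depth less than $d$, look at the restricted function $f_v$ at that node, and if some coordinate $i$ has $I_i(f_v) > \tau$, query that coordinate; otherwise declare the node a leaf. By construction, every leaf at depth less than $d$ has all influences at most $\tau$, so the only possibly ``bad'' leaves are the ones at maximum depth $d$. Hence it suffices to bound the probability (under the uniform input distribution) that the random path induced by $f$ reaches depth exactly $d$.

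The key ingredient is an expected-influence-drop identity. For any $i$ and any $j\neq i$, conditioning on $x_i$ and then averaging gives
\[
\EE_{x_i}\big[I_j(f_{x_i})\big] \;=\; \EE_{x_i}\EE_{x_{[n]\setminus\{i,j\}}}\big[\Var_{x_j}(f)\big] \;=\; I_j(f),
\]
while trivially $I_i(f_{x_i})=0$. Summing over $j$ gives $\EE_{x_i}[I(f_{x_i})] = I(f) - I_i(f)$. In particular, whenever we query a variable $i$ with $I_i(f_v) > \tau$, the total influence at the child node drops, in expectation, by strictly more than $\tau$.

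Now let $T$ be the (random) leaf reached by a uniformly chosen input and let $L(T)$ be its depth. Writing $f_{T_k}$ for the restriction at depth $k$ along the random path, the above identity and the stopping rule give
\[
\EE\big[I(f_{T_k})\big] - \EE\big[I(f_{T_{k+1}})\big] \;\geq\; \tau \cdot \Pr[L(T) > k],
\]
so telescoping from $k=0$ to $k=d-1$ yields $I(f) \geq \tau \cdot \EE[L(T)] \geq \tau \cdot d \cdot \Pr[L(T)=d]$. Therefore the fraction of leaves at the maximum depth $d$ is at most $I(f)/(\tau d)$, and choosing $d = 2 + I/(\tau\eps)$ makes this at most $\eps$, which is exactly the claim.

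The only real obstacle is the expected-influence identity above; everything else is a clean telescoping/martingale argument. In the Boolean case one can verify the identity directly from the Fourier expansion ($\EE_{x_i}[\hat{f_{x_i}}(S)^2] = \hat f(S)^2 + \hat f(S\cup\{i\})^2$ for $S\not\ni i$), but the conditional-variance derivation above works verbatim for any product measure on $\Omega^n$, which is what the lemma statement requires. The small additive ``$2$'' in the depth bound is just a safety buffer to absorb the trivial case $I(f)=0$ and off-by-one issues in counting internal nodes versus edges.
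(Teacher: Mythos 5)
Your proposal is correct and is essentially the same proof as the paper's: the same greedy construction (split on a high-influence coordinate, cap the depth at $d$), the same expected-influence-drop identity (the paper writes it as $I(f_{x_I}) = I_j(f_{x_I}) + \frac{1}{2}(I(f_{y_J}) + I(f_{z_K}))$ in the Boolean case, which is your $\EE_{x_i}[I(f_{x_i})] = I(f) - I_i(f)$), and the same telescoping bound $I(f) \geq \tau\,\EE[\text{depth}]$ leading to $\Pr[\text{depth}=d] \leq I/(\tau d)$. The only cosmetic difference is that you phrase the telescoping in terms of the random path's expected depth rather than the paper's sum $\sum_{\ell} 2^{-D(\ell)} D(\ell)$ over leaves, and you derive the identity via conditional variance (valid on any product space) rather than via the Boolean Fourier expansion.
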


\begin{proof}
The construction of the decision tree is standard. If a function $f_{x_I}$ at a certain node $x_I$ has all influences less than $\tau$ or if the node is at level $d$ do nothing. Otherwise, condition on the variable $j$ with the maximum influence in $f_{x_I}$ and create two children $f_{y_J}$ and $f_{z_K}$ where $J = K = I \cup \{ j \}$, $y_i  = z_i = x_i$ for all $i \in I$ 
and $y_j =0$ and $z_j = 1$. Since 
\[
I(f_{x_I}) = I_j(f_{x_I}) + \frac{1}{2}(I(f_{y_J}) + I(f_{z_K})),
\]
it easily follows that if $L$ is the set of leaves of the tree and if $D(\ell)$ denotes the depth of leaf $\ell$ then 
\[
I(f) \geq \tau \sum_{\ell \in L} 2^{-\ell} D(\ell).
\]
Therefore if $p$ is the fraction of paths that reach level $d$ then 
\[
I(f) \geq (d-1) \tau p \implies p \leq I / (d-1) \tau,
\]
and taking $d -1$ to be the smallest integer that is greater or equal to $\frac{I}{\tau \eps}$ we obtain the desired result. 
\end{proof} 

%\subsection{Resilience and conditioning}
%We recall the following standard fact
%\begin{lemma} \label{lem:res}
%If $f : \{-1,1\}^n \to [0,1]$ is $(r,\alpha)$-resilient then for every $s \leq r$, for every $S \subset [n]$ with $|S| = s$, and for every 
%$z \in \{-1,1\}^s$, it holds that 
%\begin{equation} \label{eq:resilient} 
%|\EE[f | x_S = z]- \EE[f]| \leq 2^s \alpha \leq 2^{r} \alpha. 
%\end{equation} 
%\end{lemma} 

%\begin{proof} 
%\[ 
%|\EE[f | x_S = z]- \EE[f]| = \big| \sum_{\emptyset \subsetneq T \subset S}  \hat{f}(T) z_T \big| \leq 
%\sum_{\emptyset \subsetneq T \subset S} |\hat{f}(T)| \leq 2^s \alpha.
%\]
%\end{proof} 

\subsection{Statement of Special Case} 
We will first prove the following theorem which may be viewed as a special case of both 
Theorem~\ref{thm:multi} (as we only consider two functions) and of Theorem~\ref{thm:three} (as the resilience conditions we impose are stronger). 

\begin{theorem} \label{thm:two}
For every $\eps > 0, 0 \leq \rho < 1$, there exist $r,\alpha> 0$ for which the following holds. 
Let $f : \{-1,1\}^n \to [0,1]$ that is $(r,\alpha)$-resilient and let $g :  \{-1,1\}^n \to [0,1]$ be an arbitrary function. 
Then 
\[
\langle f, g \rangle_{\rho} \leq \langle \chi_{\mu_f}, \chi_{\mu_g} \rangle_{\rho}+ \eps. 
\]
One can take
\begin{equation} \label{eq:r_alpha}
r = O \left(\frac{1}{\eps^2(1-\rho)\tau} \right), \alpha = O\left(\eps 2^{-r} \right), 
\end{equation} 
where $\tau$ is given by (\ref{eq:tau_bound}).
\end{theorem}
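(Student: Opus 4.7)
I follow the two-function sketch in the introduction: construct a decision tree on the $y$-variables (the inputs to $g$) so that on most leaves a smoothed restriction of $g$ has small max-influence, apply Theorem~\ref{thm:MIST} at each such leaf with the smoothed restriction playing the role of the low-influence function, use $(r,\alpha)$-resilience of $f$ to keep the effective ``first function'' at each leaf near $\mu_f$, and then average over leaves using concavity of $\mu \mapsto \langle \chi_{\mu_f}, \chi_\mu \rangle_\rho$, which is valid for $\rho \geq 0$.

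\textbf{Smoothing for $n$-independent depth.} The main obstacle is that Lemma~\ref{lem:dec} bounds tree depth only by $O(I(g)/(\tau \eps))$, and for an arbitrary $g : \{-1,1\}^n \to [0,1]$ one only has $I(g) \leq n$. To make the depth $n$-independent (and hence compatible with a resilience parameter $r$ independent of $n$), I first smooth: pick $\rho' = 1-\delta$ with $\delta = \Theta(\eps(1-\rho))$ and set $\tilde g := T_{\rho'} g \in [0,1]$. By the semigroup property $T_s \circ T_{\rho'} = T_{s\rho'}$, we have $\langle f, g\rangle_\rho = \langle f, \tilde g \rangle_s$ with $s := \rho/\rho' \in (\rho, 1)$, and Lemma~\ref{lem:gaussian_continuous} bounds the eventual cost of restoring $\rho$ from $s$ by $10(s-\rho)/(1-s) = O(\eps)$. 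A standard one-line estimate gives $I(\tilde g) = \sum_S |S|(\rho')^{2|S|}\hat g(S)^2 = O(1/\delta)$, so Lemma~\ref{lem:dec} applied to $\tilde g$ with threshold $\tau = \tau(s,\eps)$ from Theorem~\ref{thm:MIST} and leaf fraction $\eps$ yields a tree of depth $d = O(1/(\tau \eps^2 (1-\rho)))$, matching the $r$ claimed in the theorem.

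\textbf{Per-leaf reduction.} Each leaf $\ell$ fixes $y_{T_\ell} = \sigma_\ell$ with $|T_\ell| \leq d \leq r$. Integrating out $x_{T_\ell}$ (whose conditional law under $y_{T_\ell} = \sigma_\ell$ is determined by $\sigma_\ell$), define
\[
h_\ell(x_{[n]\setminus T_\ell}) := \EE_{x_{T_\ell}\mid\sigma_\ell}\bigl[f(x_{T_\ell}, x_{[n]\setminus T_\ell})\bigr].
\]
Since $(x_{[n]\setminus T_\ell}, y_{[n]\setminus T_\ell})$ remains an $s$-correlated uniform pair independent of $(x_{T_\ell}, y_{T_\ell})$, one obtains $\EE[f(x)\tilde g(y) \mid y_{T_\ell} = \sigma_\ell] = \langle h_\ell, \tilde g_\ell \rangle_s$, where $\tilde g_\ell$ is the leaf restriction of $\tilde g$. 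At a good leaf ($\tilde g_\ell$ has max-influence $\leq \tau$), Theorem~\ref{thm:MIST} yields $\langle h_\ell, \tilde g_\ell \rangle_s \leq \langle \chi_{\mu_{h_\ell}}, \chi_{\mu_{\tilde g_\ell}} \rangle_s + \eps$. Moreover $\mu_{h_\ell}$ is a convex combination of values $\EE[f \mid X_{T_\ell} = x_{T_\ell}]$, each within $\alpha$ of $\mu_f$ by $(r,\alpha)$-resilience, so $|\mu_{h_\ell} - \mu_f| \leq \alpha$; Lemma~\ref{lem:gaussian_continuous} then lets me substitute $\mu_f$ for $\mu_{h_\ell}$ at cost $2\alpha$ and $\rho$ for $s$ at cost $O(\eps)$.

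\textbf{Averaging and closing.} Bad leaves (at most $\eps$ probability mass) contribute at most $\eps$ via the trivial bound $\langle h_\ell, \tilde g_\ell \rangle_s \leq 1$. For good leaves, the function $\mu \mapsto \langle \chi_{\mu_f}, \chi_\mu \rangle_\rho$ is concave on $[0,1]$ for $\rho \geq 0$: its derivative equals $\Phi\!\bigl((\Phi^{-1}(\mu_f) - \rho\Phi^{-1}(\mu))/\sqrt{1-\rho^2}\bigr)$, which is decreasing in $\mu$. Combined with $\EE_\ell \mu_{\tilde g_\ell} = \mu_{\tilde g} = \mu_g$, Jensen gives $\EE_\ell \langle \chi_{\mu_f}, \chi_{\mu_{\tilde g_\ell}} \rangle_\rho \leq \langle \chi_{\mu_f}, \chi_{\mu_g}\rangle_\rho$. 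Collecting all errors and rescaling $\eps$ and $\alpha$ suitably gives the theorem. The only genuinely non-routine step is the preliminary smoothing that bounds the tree depth independent of $n$; once the depth is capped by $r$, the per-leaf analysis is a direct combination of resilience, MIST, and the Gaussian-stability continuity bounds.
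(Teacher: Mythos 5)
Your proof is correct but takes a genuinely different route from the paper's. The paper smooths and builds the decision tree on the \emph{resilient} function: it replaces $f$ by $h=T_\eta f$, observes that $I(h)=O\bigl(1/(\eps(1-\rho))\bigr)$ so Lemma~\ref{lem:dec} gives a tree of depth $d=O\bigl(1/(\tau\eps^2(1-\rho))\bigr)$, conditions on both a leaf $x_I$ and its $\rho$-correlated partner $y_I$, applies Theorem~\ref{thm:MIST} at good leaves with the low-influence restriction of $h$ against the arbitrary restriction of $g$, uses the resilience of $f$ (which $T_\eta f$ inherits) to bound the leaf means of $h$ by $\mu_f+O(\eps)$, and averages over $y_I$ via Borell (Lemma~\ref{lem:Borell}) applied to the mixture $\psi=\sum\PP[x_I]\PP[y_I\mid x_I]\chi_{\mu(y_I)}$. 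You instead smooth and tree the \emph{arbitrary} function: the same smoothing trick caps $I(T_{\rho'}g)$ and hence the tree depth; conditioning on $y_{T_\ell}$ and integrating out $x_{T_\ell}$ produces $h_\ell$, you apply MIST with the low-influence $\tilde g_\ell$ against $h_\ell$, use $(r,\alpha)$-resilience of $f$ directly to bound $|\mu_{h_\ell}-\mu_f|\le\alpha$, and average using concavity of $\mu\mapsto\langle\chi_{\mu_f},\chi_\mu\rangle_\rho$ (a special case of Borell, since the averaged Gaussian function here is a mixture of threshold indicators). Both routes work; yours in fact matches the informal sketch in the paper's ``Proof Ideas'' subsection (``a decision tree for $g$ only'') more closely than the paper's own formal argument does, avoids the extra step of verifying that $T_\eta f$ inherits resilience, and only requires $\alpha=O(\eps)$ rather than the stated $O(\eps\,2^{-r})$ --- the extra factor $2^{-r}$ in the paper's parameter choice is slack that is not used by the argument there either.
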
 
Note in particular that for our current bounds for $\tau$ and for 
fixed $\rho$, $r$ is exponential in a polynomial in $1/\eps$ and 
$\alpha$ is doubly exponential in a polynomial in $1/\eps$. 

%%%%%%%%%%%%%%%%%%%%%%%%%%%%%
\section{Proof of the special case} \label{sec:proof} 
%%%%%%%%%%%%%%%%%%%%%%%%%%%%%%%

The proof will be carried out via a sequence of reductions which will give the function $f$ more and more structure. 
Fix $\eps > 0$ and $0 \leq \rho' < 1$. 
Recall that we want to show that if $f$ is $(d(\eps,\rho'),\alpha(\eps,\rho'))$ resilient  then for all $g$ bounded between 
$0$ and $1$, 
\begin{equation} \label{eq:goal0}
\langle  f, g \rangle_{\rho'} \leq \langle \chi_{\mu_f}, \chi_{\mu_g} \rangle_{\rho'}+ \eps. 
\end{equation}

\begin{lemma} \label{lem:eta}
In order to prove~(\ref{eq:goal0}) it suffices to prove that 
\begin{equation} \label{eq:goal1}
\langle  T_{\eta} f, g \rangle_{\rho} \leq \langle  \chi_{\mu_f}, \chi_{\mu_g} \rangle_{\rho} + \eps/2. 
\end{equation}
for 
\begin{equation} \label{eq:rho_eta}
\rho = (1-0.01 \eps) \rho' + 0.01 \eps, \quad \eta = \rho'/\rho = 1- \Theta( \eps (1-\rho')). 
\end{equation} 
\end{lemma}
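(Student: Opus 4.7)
The plan is to exploit the semigroup identity $T_{\eta} T_{\rho} = T_{\eta \rho} = T_{\rho'}$ together with the continuity of the Gaussian noisy inner product in the correlation parameter.

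First I would unpack the left-hand side of the hypothesis. Since the noise operator is self-adjoint, writing $(x,y)$ for a $\rho$-correlated pair, one has
\[
\langle T_{\eta} f, g\rangle_{\rho}
= \EE\bigl[(T_{\eta} f)(x)\,g(y)\bigr]
= \EE\bigl[(T_{\eta} f)(x)\, T_{\rho} g(x)\bigr]
= \EE\bigl[f(x)\, T_{\eta}T_{\rho} g(x)\bigr]
= \EE\bigl[f(x)\, T_{\rho'} g(x)\bigr]
= \langle f, g\rangle_{\rho'},
\]
using $\eta \rho = \rho'$ from the definition in (\ref{eq:rho_eta}). So the hypothesized inequality (\ref{eq:goal1}) already gives us the correct left-hand side of (\ref{eq:goal0}) up to replacing the Gaussian reference value at correlation $\rho$ by the one at correlation $\rho'$.

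Next I would compare $\langle \chi_{\mu_f}, \chi_{\mu_g}\rangle_{\rho}$ with $\langle \chi_{\mu_f}, \chi_{\mu_g}\rangle_{\rho'}$ using Lemma~\ref{lem:gaussian_continuous}. A short computation from (\ref{eq:rho_eta}) gives $\rho - \rho' = 0.01\eps(1-\rho')$ and $1-\rho = (1-0.01\eps)(1-\rho')$, so that
\[
\frac{\rho - \rho'}{1-\rho} = \frac{0.01\eps}{1-0.01\eps} \leq 0.02\eps
\]
(say, for $\eps \leq 1$), and hence Lemma~\ref{lem:gaussian_continuous} yields
\[
\bigl|\langle \chi_{\mu_f}, \chi_{\mu_g}\rangle_{\rho} - \langle \chi_{\mu_f}, \chi_{\mu_g}\rangle_{\rho'}\bigr| \leq 10 \cdot 0.02\,\eps = 0.2\,\eps \leq \eps/2.
\]

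Combining the two observations finishes the proof in one line: assuming (\ref{eq:goal1}),
\[
\langle f, g\rangle_{\rho'}
= \langle T_{\eta} f, g\rangle_{\rho}
\leq \langle \chi_{\mu_f}, \chi_{\mu_g}\rangle_{\rho} + \eps/2
\leq \langle \chi_{\mu_f}, \chi_{\mu_g}\rangle_{\rho'} + \eps,
\]
which is exactly (\ref{eq:goal0}). There is no real obstacle here; the only thing to be careful about is verifying that $\rho < 1$ and $\eta < 1$ so that all the operators and estimates are well-defined, both of which follow immediately from $\rho' < 1$ and $\eps > 0$ in (\ref{eq:rho_eta}).
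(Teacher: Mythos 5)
Your proof is correct and follows the same route as the paper: rewrite $\langle T_\eta f, g\rangle_\rho$ as $\langle f, g\rangle_{\rho'}$ via the semigroup identity $T_\eta T_\rho = T_{\rho'}$, then move the Gaussian reference value from correlation $\rho$ to $\rho'$ using Lemma~\ref{lem:gaussian_continuous}. The paper leaves the semigroup step implicit and you spell it out, but the argument and the numerical check that $10(\rho-\rho')/(1-\rho) < \eps/2$ are the same.
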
 

\begin{proof} 
Write $\rho' = \rho \eta$, where $1-\rho \geq (1-\rho')/2$ and $\eta < 1$. Note that $f$ and $T_{\eta} f$ have the same 
expected value. If we could establish~(\ref{eq:goal1}) 
%\[
%\langle T_{\eta} f, g \rangle_{\rho} \leq \langle  \chi_{\mu_f}, \chi_{\mu_g} \rangle_{\rho} + \eps/2
%\]
and  
\begin{equation} \label{eq:diff_chis}
|\langle  \chi_{\mu_f}, \chi_{\mu_g} \rangle_{\rho} - \langle  \chi_{\mu_f}, \chi_{\mu_g} \rangle_{\rho'}| < \eps/2, 
\end{equation} 
then~(\ref{eq:goal0}) would follow. Note that (\ref{eq:diff_chis}) follows from lemma~\ref{lem:gaussian_continuous} when 
\[
\frac{10 (\rho - \rho')}{1-\rho} < \eps/2.
\]
We may thus choose $\rho$ and $\eta$ as in~(\ref{eq:rho_eta}).  
%\[
%\rho = \min(1/2 + \rho'/2, \rho' + 0.01 \eps (1-\rho')).
%\]
%Note that 
%\[
%\eta = \rho'/\rho = 1- \Theta( \eps (1-\rho'))
%\]
%as needed.
\end{proof} 

\begin{lemma} \label{lem:dec_rep} 
Let $\tau$ be chosen so that (\ref{eq:mist}) holds with error $0.01 \eps$ for $\rho$.  
Then it suffices to prove (\ref{eq:goal1}) for a function $h = T_{\eta} f$ that has a decision tree of 
depth $d$ and such that for  at most $0.01 \eps$ fraction of the inputs a random path of the decision tree terminates at a node with some influence greater than $\tau$. Moreover
\[
d = O \left( \frac{1}{\eps^2 (1-\rho) \tau} \right)
\]
\end{lemma}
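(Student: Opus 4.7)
The plan is simply to apply the regularity lemma (Lemma~\ref{lem:dec}) directly to $h = T_\eta f$. No additional reduction is required: once we know that every such $h$ admits a decision tree with the claimed properties, proving (\ref{eq:goal1}) for functions of that form automatically yields it for all $T_\eta f$, which by Lemma~\ref{lem:eta} is what we need.

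The only nontrivial ingredient is a bound on the total influence $I(h)=\sum_i I_i(h)$. Expanding in the Fourier basis,
\[
I(h) \;=\; \sum_S |S|\,\eta^{2|S|}\,\hat f(S)^2,
\]
and since $|S|\,\eta^{2|S|} \le \max_{k\ge 0} k\eta^{2k} = \frac{1}{2e\ln(1/\eta)}$ (attained near $k \approx 1/(2\ln(1/\eta))$), together with Parseval $\sum_S \hat f(S)^2 \le \|f\|_2^2 \le 1$, this yields $I(h) \le \frac{1}{2e\ln(1/\eta)}$. The choice $\eta = 1-\Theta(\eps(1-\rho'))$ from (\ref{eq:rho_eta}), combined with $1-\rho = \Theta(1-\rho')$, gives $\ln(1/\eta) = \Theta(1-\eta) = \Theta(\eps(1-\rho))$, so
\[
I(h) \;=\; O\!\left(\frac{1}{\eps(1-\rho)}\right).
\]

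Now invoke Lemma~\ref{lem:dec} on $h$ with influence threshold $\tau$ and bad-fraction parameter $0.01\eps$. This produces a decision tree for $h$ of depth
\[
d \;\le\; 2 + \frac{I(h)}{0.01\,\tau\,\eps} \;=\; O\!\left(\frac{1}{\eps^2(1-\rho)\,\tau}\right),
\]
such that except for at most a $0.01\eps$ fraction of leaves (under the uniform measure on root-to-leaf paths) the leaf function has maximum influence at most $\tau$. This is exactly the structure asserted by Lemma~\ref{lem:dec_rep}.

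\textbf{Main obstacle.} There is no substantive obstacle: everything reduces to a one-line Fourier computation plus an off-the-shelf application of Lemma~\ref{lem:dec}. The only care required is bookkeeping the dependence on $\eps$ and $1-\rho$: the smoothing parameter $\eta = 1 - \Theta(\eps(1-\rho'))$ produced by Lemma~\ref{lem:eta} is what converts the elementary bound $I(T_\eta f) = O(1/\ln(1/\eta))$ into the $O(1/(\eps(1-\rho)))$ bound, which then yields the stated depth.
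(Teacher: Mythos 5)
Your proposal is correct and follows essentially the same route as the paper: compute the total influence $I(T_\eta f) = \sum_S |S|\eta^{2|S|}\hat f(S)^2 \le \max_k k\eta^{2k} = O(1/(\eps(1-\rho)))$ and then invoke Lemma~\ref{lem:dec} with threshold $\tau$ and bad-fraction $0.01\eps$. The only cosmetic difference is that you make the value $\max_k k\eta^{2k}=\frac{1}{2e\ln(1/\eta)}$ explicit where the paper simply states the $O(\cdot)$ bound.
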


\begin{proof} 
We note that the function $h = T_{\eta} f$ satisfies: 
\[
I(h) := \sum I_i(h) = \sum_{S} |S| \hat{f}^2(S) \eta^{2 |S|} \leq \max_s s \eta^{2 s} = O\left( \frac{1}{\eps (1-\rho)} \right).  
\]
Apply lemma~\ref{lem:dec} to obtain a decision tree for $h$ where for at most $0.01 \eps$ fraction of the inputs, a random path of the decision tree terminates at a node with some influence greater than $\tau$. Note that the depth of the tree satisfies
\[
d \leq C (1+  \frac{I}{\tau \eps}) \leq C(1 + \frac{1}{\eps^2 (1-\rho) \tau})
\] 
as needed.
\end{proof} 

We now conclude the proof of Theorem~\ref{thm:two}. 
\begin{proof} 
Let $h$ be a function such as in lemma~\ref{lem:dec_rep}. Assume furthermore that 
$f$ is $(d,0.01 \eps 2^{-d})$ resilient. Note that this implies that $h  = T_{\eta} f$ is also $(d,0.01 \eps 2^{-d})$ resilient. 
Let $x,y$ be two $\rho$-correlated inputs. 
Then
\[
\EE[h(x) g(y)] = \sum_{x_I,y_I} \PP[x_I] \PP[y_I | x_I] \EE[h(x) g(y) | x_I, y_I],
\] 
where $x_I$ denotes a random leaf of the decision tree and $y_I$ is chosen after $x_I$ to be a $\rho$-correlated 
version of $x_I$. Let $A$ denote the set of $x_I$ for which $f_{x_I}$ has all influences less than $\tau$. Then:
\begin{eqnarray*}
\EE[h(x) g(y)] &=& \sum_{x_I,y_I} \PP[x_I] \PP[y_I | x_I] \EE[h(x) g(y) | x_I, y_I] 
\\ &\leq& 0.01 \eps + 
\sum_{x_I \in A} \PP[x_I] \sum_{y_I} \PP[y_I | x_I] \EE[h(x) g(y) | x_I, y_I]. 
\end{eqnarray*}
Write $\mu' = \mu_f + 0.01\eps$ and $\mu(y_I) = \EE[g(y) | y_I]$. 
Note that since $h$ is $(d,0.01 \eps 2^{-d})$-resilient 
%and by (\ref{eq:resilient}) 
it follows that for all leaves 
$x_I$ it holds that $\EE[h | x_I] \leq \mu'$. Thus for  $x_I \in A$ we can apply (\ref{eq:mist})  to obtain that 
\[
\EE[h(x) g(y) | x_I, y_I] \leq \langle  \chi_{\mu'},  \chi_{\mu(y_I)} \rangle_{\rho} + 0.01 \eps.
\]
Plugging this back in we obtain the bound
\begin{eqnarray*}
\EE[h(x) g(y)] &\leq&  
0.02 \eps + \sum_{x_I \in A} \PP[x_I] \sum_{y_I} \PP[y_I | x_I] \langle  \chi_{\mu'},  \chi_{\mu(y_I)} \rangle_{\rho} \\ 
&\leq& 0.02 \eps + \sum_{x_I,y_I} \PP[x_I] \PP[y_I | x_I] \langle  \chi_{\mu'}, \chi_{\mu(y_I)} \rangle_{\rho} \\
&=& 0.02 \eps +  \langle  \chi_{\mu'},  (\sum_{x_I,y_I} \PP[x_I] \PP[y_I | x_I] \chi_{\mu(y_I)}) \rangle_{\rho}
\end{eqnarray*}
Note that $\psi = \sum_{x_I,y_I} \PP[x_I] \PP[y_I | x_I] \chi_{\mu(y_I)}$ is a $[0,1]$-valued function with
$\EE[\psi] = \EE[g]$. Thus by lemma~\ref{lem:Borell} it follows that 
\[
0.02 \eps +  \langle  \chi_{\mu'}, (\sum_{x_I,y_I} \PP[x_I] \PP[y_I | x_I] \chi_{\mu(y_I)}) \rangle_{\rho} \leq 
0.02 \eps + \langle  \chi_{\mu'},  \chi_{\mu_g} \rangle_{\rho} \leq 0.04 \eps +  \langle  \chi_{\mu_f},  \chi_{\mu_g} \rangle_{\rho},
\]
where the last inequality follows from lemma~\ref{lem:gaussian_continuous}. 
\end{proof} 

%%%%%%%%%%%%%%%%%%%%%%%%%%%%%%%%%%%%%%%%%%%%%%%%%%%%%%%%%%%
\section{The general case} \label{sec:general}
%%%%%%%%%%%%%%%%%%%%%%%%%%%%%%%%%%%%%%%%%%%%%%%%%%%%%%%%%%%%

In this section we discuss how to obtain the general result of the paper in the context discussed in~\cite{Mossel:10,HaHoMo:16}. 
We begin by recalling the setup and some results as formulated in~\cite{HaHoMo:16}.  

\subsection{Setup}
We begin with formally defining this general setting. 
Let $\Omega$ be a finite set  
and assume we are given a probability distribution $\mathcal{P}$
over $\Omega^\ell$
for some $\ell \ge 2$ -- we will call it an
\emph{$\ell$-step probability distribution over $\Omega$}.

Furthermore, assume we are given $n \in \bbN$.
We consider $\ell$ vectors $\underline{X}^{(1)}, \allowbreak \ldots, 
\allowbreak \underline{X}^{(\ell)}$,
$\underline{X}^{(j)} = (X_1^{(j)},\allowbreak \ldots, X_n^{(j)})$ such that 
for every $i \in [n]$, the $\ell$-tuple $(X_i^{(1)}, \ldots, \allowbreak  X_i^{(\ell)})$ 
is sampled according to $\mathcal{P}$, independently of the other
coordinates $i' \neq i$ 
(see Figure~\ref{fig:naming} for an overview of the notation). 
We write $\pi_1,\ldots,\pi_{\ell}$ for the $\ell$ marginals of $\pi$ and 
$\pi_{\ast}$ for the minimum non-zero marginal probability. We write $\EE_i$ for the expected value with respect to 
$\pi_i$ and power of $\pi_i$. 

We will write $F$ for a vector of $\ell$ functions $f^{(1)},\ldots,f^{(\ell)}$, where $f^{(j)} : \Omega^n \to \bbR$. 
We will write 
\[
\langle F \rangle_{\mathcal{P}} := \EE \left[ \prod_{j=1}^\ell f^{(j)}(\underline{X}^{(j)}) \right]
\]
and
\[
\EE[F] = \left( \EE_1[f^{(1)}],\ldots,\EE_{\ell}[f^{(\ell)}] \right)
\]

The Gaussian version of the distribution $\mathcal{P}$, denoted by $\mathcal{G}$, is a Gaussian distribution on $\bbR^{|\Omega| k}$ that has the same first two moments as the joint distribution of 
\[
\Big( 1 \big( (X_i^{(1)}, \ldots, \allowbreak  X_i^{(\ell)}) = (\omega_1,\ldots,\omega_{\ell}) \big) : (\omega_1,\ldots,\omega_k) \in \Omega^k \Big).
\]
Such a distribution is constructed more explicitly in~\cite{Mossel:10,HaHoMo:16} but the details of the construction are not needed here. Given $n \in \bbN$, we consider again $\ell$ vectors $\underline{N}^{(1)}, \allowbreak \ldots, 
\allowbreak \underline{N}^{(\ell)}$,
$\underline{N}^{(j)} = (N_1^{(j)},\allowbreak \ldots, N_n^{(j)})$ such that 
for every $i \in [n]$, the $\ell$-tuple $(N_i^{(1)}, \ldots, \allowbreak  N_i^{(\ell)})$ 
is sampled according to $\mathcal{G}$, independently of the other
coordinates $i' \neq i$ (following the same notation as Figure~\ref{fig:naming}). 
Again, we will write $\Phi$ for a collection of $\ell$ functions: $\phi^{(1)},\ldots,\phi^{(\ell)} : \bbR^{|\Omega| k} \to [0,1]$ and write 
\[
\langle \Phi \rangle_{\mathcal{G}} := \EE \left[ \prod_{j=1}^\ell \phi^{(j)}(\underline{G}^{(j)}) \right]
\]
Given $\mu = (\mu_1,\ldots,\mu_{\ell}) \in [0,1]^{\ell}$, we define 
\[
\Gamma_{\mathcal{G}}(\mu) = \sup_{n, \phi}  \Big( \langle \Phi \rangle_{\mathcal{G}}  \; : \;  \forall j\,:\, \phi_j : \bbR \to [0,1], \EE[\Phi] = \mu \Big)
\]
It is immediate to see that 
\begin{equation} \label{eq:diffGamma}
|\Gamma_{\mathcal{G}}(\mu) - \Gamma_{\mathcal{G}}(\nu)| \leq \sum_{i=1}^n |\mu_i - \nu_i|.
\end{equation} 

\begin{figure}
\begin{tikzpicture}[yscale=-0.7]
\node(X__) at (0,0) {$\underline{\overline{X}}$};
\node(X1_) at (1,0) {$\overline{X}_1$};
\node(X2_) at (2,0) {$\overline{X}_2$};
\node(X3_) at (3,0) {$\dots$};
\node(X4_) at (4,0) {$\overline{X}_i$};
\node(X5_) at (5,0) {$\dots$};
\node(X6_) at (6,0) {$\overline{X}_n$};

\node(X_1) at (0,1) {$\underline{X}^{(1)}$};
\node(X11) at (1,1) {$X_1^{(1)}$};
\node(X21) at (2,1) {$X_2^{(1)}$};
\node(X31) at (3,1) {$\cdots$};
\node(X41) at (4,1) {$X_i^{(1)}$};
\node(X51) at (5,1) {$\cdots$};
\node(X51) at (6,1) {$X_n^{(1)}$};

\node(X_2) at (0,2) {$\underline{X}^{(2)}$};
\node(X12) at (1,2) {$X_1^{(2)}$};
\node(X22) at (2,2) {$X_2^{(2)}$};
\node(X32) at (3,2) {$\cdots$};
\node(X42) at (4,2) {$X_i^{(2)}$};
\node(X52) at (5,2) {$\cdots$};
\node(X52) at (6,2) {$X_n^{(2)}$};

\node(X_3) at (0,3) {$\vdots$};
\node(X13) at (1,3) {$\vdots$};
\node(X23) at (2,3) {$\vdots$};
\node(X43) at (4,3) {$\vdots$};
\node(X63) at (6,3) {$\vdots$};

\node(X_4) at (0,4) {$\underline{X}^{(j)}$};
\node(X14) at (1,4) {$X_1^{(j)}$};
\node(X24) at (2,4) {$X_2^{(j)}$};
\node(X34) at (3,4) {$\cdots$};
\node(X44) at (4,4) {$X_i^{(j)}$};
\node(X54) at (5,4) {$\cdots$};
\node(X64) at (6,4) {$X_n^{(j)}$};

\node(X_5) at (0,5) {$\vdots$};
\node(X15) at (1,5) {$\vdots$};
\node(X25) at (2,5) {$\vdots$};
\node(X45) at (4,5) {$\vdots$};
\node(X65) at (6,5) {$\vdots$};

\node(X_6) at (0,6) {$\underline{X}^{(\ell)}$};
\node(X16) at (1,6) {$X_1^{(\ell)}$};
\node(X26) at (2,6) {$X_2^{(\ell)}$};
\node(X36) at (3,6) {$\cdots$};
\node(X46) at (4,6) {$X_i^{(\ell)}$};
\node(X56) at (5,6) {$\cdots$};
\node(X66) at (6,6) {$X_n^{(\ell)}$};

\draw[-] (0.5,-.5) to (0.5,6.5);
\draw[-] (-.5,0.5) to (6.5,0.5);

\node(iid) at (4,-3) {\begin{minipage}{4.2cm}\raggedright
Tuples $\overline{X}_{i}$ are 
i.i.d.~according to $\mathcal{P}$.
The marginals of $\mathcal{P}$ are~$\pi_j$.\end{minipage}};

\draw[->] (iid) to (X1_.north);
\draw[->] (iid) to (X2_.north);
\draw[->] (iid) to (X4_.north);
\draw[->] (iid) to (X6_.north);

\node[rotate=0](rowdistr) at (-2.5, 4) 
{\begin{minipage}{3cm}\raggedright Vectors $\underline{X}^{(j)}$
are distributed according to $\underline{\pi_j} := \pi_j^n$.
\end{minipage}};
\draw[->] (rowdistr) to (X_1.west);
\draw[->] (rowdistr) to (X_2.west);
\draw[->] (rowdistr) to (X_4.west);
\draw[->] (rowdistr) to (X_6.west);

\node(commonDist) at (-1, -3) 
{\begin{minipage}{3.2cm}\raggedright Distributed according to 
$\underline{\mathcal{P}}:=\mathcal{P}^n$.\end{minipage}};
\draw[->] (commonDist) to (X__.north west);

\node(propertiesofP) at (0,9) {\begin{minipage}{5cm}\begin{align*}
\alpha(\mathcal{P}) &:= \min_{x \in \Omega} \mathcal{P}(x,x,\ldots,x)\\
\rho(\mathcal{P}) &: \text{See Definition~\ref{def:correlation}}
\end{align*}\end{minipage}};

\node(domainsofxij) at (6,10) {\begin{minipage}{4cm}\begin{align*}
X_{i}^{(j)} &\in \Omega\\
\underline{X}^{(j)} &\in \underline{\Omega} := \Omega^{n}\\
\overline{X}_{i} & \in \overline{\Omega} := \Omega^{\ell}\\
\underline{\overline{X}} & \in \underline{\overline{\Omega}} 
:= \Omega^{n\cdot \ell}\\
S &\subseteq \underline{\Omega}
\end{align*}\end{minipage}};

\end{tikzpicture}
\caption{Naming of the random variables in the general case.
The columns $\overline{X}_i$ are distributed $i.i.d$ according to
$\mathcal{P}$. 
Each $X_{i}^{(j)}$ is distributed according to $\pi_j$.
The overall distribution of $\overline{\underline{X}}$ is
$\underline{\mathcal{P}}$.}
\label{fig:naming}
\end{figure}
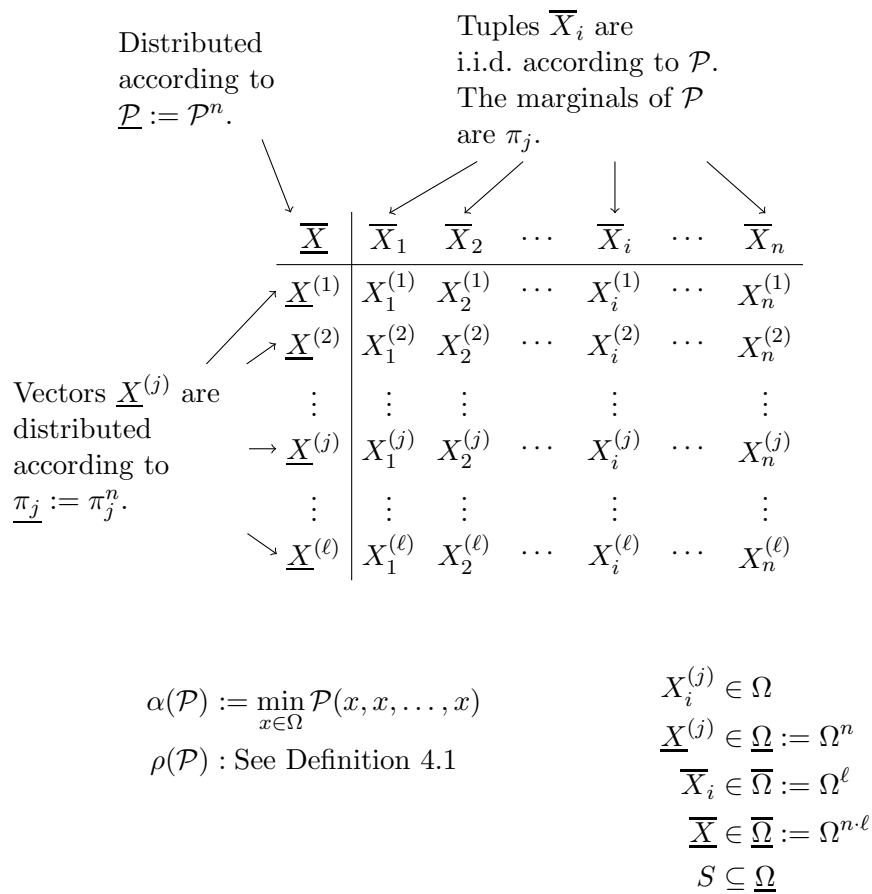

We will also require the following definition, see also~\cite{Mossel:10}:
\begin{definition}\label{def:correlation}
Let $\mathcal{P}$ be a single-coordinate distribution
and let $S, T \subseteq [\ell]$. We define the \emph{correlation}:
\begin{align*}
\rho(\mathcal{P}, S, T) &:=
\sup \Bigl\{ \Cov[f(X^{(S)}), g(X^{(T)})] \Bigm| 
f: \Omega^{(S)} \to \mathbb{R}, 
g: \Omega^{(T)} \to \mathbb{R}, \\
&\qquad\qquad
\Var[f(X^{(S)})] = \Var[g(X^{(T)})] = 1 
\Bigr\} \; .
\end{align*}
The correlation of $\mathcal{P}$ is
$\rho(\mathcal{P}) := \max_{j \in [\ell]} 
\rho\left(\mathcal{P}, \{j\}, [\ell]\setminus\{j\}\right)$.
\end{definition}
As in previous results~\cite{Mossel:10,HaHoMo:16}, we will require that $\rho(\mathcal{P})$ is strictly less than $1$.
The opposite condition $\rho(\mathcal{P}) = 1$ is equivalent
to the following:
There exists $j \in [\ell]$, $S \subseteq \Omega$, 
$T \subseteq \Omega^{\ell-1}$ such that
$0 < |S| < |\Omega|$ and:
\begin{align*}
X_i^{(j)} \in S \iff
\left(X_i^{(1)}, \ldots, X_i^{(j-1)}, X_i^{(j+1)}, \ldots, X_i^{(\ell)}\right)
\in T \; .
\end{align*}

\subsection{Prior Results} 
We will need two results from~\cite{HaHoMo:16} which are small variations of the results of~\cite{Mossel:10}.
In the discussion below we will not distinguish between a function $f$ on a finite product space and its representation as a multi-linear polynomial. 
The first results shows that condition that $\rho < 1$ implies that we can smooth functions. 

\begin{theorem}[\cite{Mossel:10}, \cite{HaHoMo:16} A.69]
\label{thm:smoothing}
Let $\overline{\underline{X}}$ be a random vector distributed according
to $(\overline{\underline{\Omega}}, \underline{\mathcal{P}})$
with $\rho(\overline{\Omega}, \mathcal{P}) \le \rho \le 1$.
%Let $\underline{\mathcal{Z}}$ be an ensemble sequence constructed from
%$\overline{\underline{X}}$ and 
%$\underline{\mathcal{X}}^{(1)}, \ldots, \underline{\mathcal{X}}^{(\ell)}$
%be ensemble sequences constructed from 
%$\underline{X}^{(1)}, \ldots, \underline{X}^{(\ell)}$, respectively.

Let $\epsilon \in (0, 1/2]$ and 
$\gamma \in \left[0, \frac{(1-\rho)\epsilon}{\ell \ln \ell/\epsilon}\right]$.

Then, for all $F = (f^{(1)}, \ldots, f^{(\ell)})$ taking values in $[0,1]^{\ell}$ it holds that 
\begin{align*}
  \left| \langle F \rangle_{\mathcal{P}} - \langle T_{1-\gamma} F \rangle_{\mathcal{P}} \right| \le \epsilon \; .
\end{align*}
\end{theorem}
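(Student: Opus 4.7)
The plan is a two-level argument: an outer hybrid swapping one $f^{(k)}$ for $T_{1-\gamma} f^{(k)}$ at a time, and an inner Efron--Stein analysis that exploits the tensorization of $\rho(\mathcal{P}) \le \rho$. Set $F_k := (T_{1-\gamma}f^{(1)},\ldots,T_{1-\gamma}f^{(k-1)},f^{(k)},\ldots,f^{(\ell)})$, so that $F_1 = F$ and $F_{\ell+1} = T_{1-\gamma}F$. By telescoping it suffices to show each swap contributes at most $\epsilon/\ell$, and the $k$-th swap difference equals $\EE[g_k(\underline{X}^{(k)})\, H_k(\underline{X}^{(-k)})]$, where $g_k := f^{(k)}-T_{1-\gamma}f^{(k)}$ has mean zero under $\pi_k^n$ and $H_k \in [0,1]$ depends only on $\underline{X}^{(-k)}$.

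For a single swap I would expand in the coordinate-wise Efron--Stein decomposition $f^{(k)} = \sum_{S\subseteq[n]} f^{(k)}_{=S}$ and $H_k = \sum_T H_k^{=T}$. Orthogonality kills all cross terms with $S\ne T$, and the key technical input is that iterating the single-coordinate inequality $\rho(\mathcal{P},\{k\},[\ell]\setminus\{k\})\le\rho$ across the $|S|$ live coordinates yields the tensorized bound
$$\bigl|\EE\bigl[f^{(k)}_{=S}(\underline{X}^{(k)})\, H_k^{=S}(\underline{X}^{(-k)})\bigr]\bigr| \le \rho^{|S|}\,\|f^{(k)}_{=S}\|_2 \|H_k^{=S}\|_2.$$
Since $g_k = \sum_S (1-(1-\gamma)^{|S|})\, f^{(k)}_{=S}$, Cauchy--Schwarz over $S$ together with Parseval (using $\|H_k\|_2\le 1$) gives
$$\bigl|\EE[g_k H_k]\bigr|^2 \;\le\; \sum_S \bigl(1-(1-\gamma)^{|S|}\bigr)^2\, \rho^{2|S|}\, \|f^{(k)}_{=S}\|_2^2.$$

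A degree split finishes the job: for $|S|\le d$ use $1-(1-\gamma)^{|S|}\le \gamma |S|\le \gamma d$, for $|S|>d$ use $\rho^{|S|}\le \rho^d$, and invoke $\sum \|f^{(k)}_{=S}\|_2^2\le 1$. This yields $|\EE[g_k H_k]| \le \gamma d + \rho^d$. Choosing $d \asymp \ln(\ell/\epsilon)/(1-\rho)$ makes $\rho^d \le \epsilon/(2\ell)$, and the hypothesis $\gamma \le (1-\rho)\epsilon/(\ell \ln(\ell/\epsilon))$ forces $\gamma d \le \epsilon/(2\ell)$; hence each swap contributes at most $\epsilon/\ell$, and summing over $k=1,\ldots,\ell$ gives the claim.

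The main obstacle is the tensorized bound $|\EE[f^{(k)}_{=S} H_k^{=S}]| \le \rho^{|S|}\|f^{(k)}_{=S}\|_2\|H_k^{=S}\|_2$: naive Cauchy--Schwarz through the definition of $\rho(\mathcal{P})$ only yields a factor of $\rho$, not $\rho^{|S|}$. Extracting the full $\rho^{|S|}$ decay requires choosing orthonormal bases on each $\Omega$ compatible with the correlation structure of $\mathcal{P}$ and iterating the one-coordinate bound carefully, in the style of~\cite{Mossel:10}. Once this tensorization is in hand, the outer hybrid and the degree split are routine bookkeeping.
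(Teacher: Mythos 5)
The paper does not prove Theorem~\ref{thm:smoothing} itself — it is imported verbatim from \cite{HaHoMo:16} (Lemma A.69) and ultimately from \cite{Mossel:10} — so there is no in-paper proof to compare against; but your argument (telescoping hybrid over $k$, Efron--Stein diagonalization with the tensorized correlation bound $\rho^{|S|}$, then a degree split at $d\asymp\ln(\ell/\epsilon)/(1-\rho)$) is correct and is essentially the standard smoothing argument used in those sources. The only caveats are cosmetic: the final constants in $\gamma d+\rho^d$ come out as roughly $2\epsilon/\ell$ rather than $\epsilon/\ell$ with your exact choice of $d$, which is easily absorbed by tuning constants, and the tensorization lemma you flag as the crux is indeed a known ingredient (Proposition 2.13 of \cite{Mossel:10}) rather than something you need to reprove.
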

The formal statement in~\cite{HaHoMo:16} requires that all marginals of $\mathcal{P}$ are equal but the proof does not require this fact. 

The second result is the main result of~\cite{Mossel:10}. The statement below follows from the proof of Theorem 4.1 in~\cite{HaHoMo:16}. 

\begin{theorem}
\label{thm:invariance}
Let $\overline{\underline{X}}$ be a random vector distributed
according to $(\overline{\underline{\Omega}}, \underline{\mathcal{P}})$ 
such that $\cP$ has $\rho(\mathcal{P}) \le \rho < 1$.

Then, for all $\epsilon > 0$, there exists 
$\tau := \tau(\epsilon, \rho, \pi_{\ast}, \ell) > 0$
such that if functions 
$f^{(1)}, \ldots, f^{(\ell)}: \underline{\Omega} \to [0, 1]$ satisfy
\begin{align}
\max_{i \in [n], j \in [\ell]} \Inf_i(f^{(j)}(\underline{X}^{(j)})) \le \tau\;,
\end{align}
%then, for $\mu^{(j)} := \EE[f^{(j)}(\underline{X}^{(j)})]$:
then, there exist functions $\Phi \to [0,1]^{\ell}$ with $\EE[\Phi] = \EE[F]$ such that 
\begin{align*}
\left| \langle F \rangle_{\mathcal{P}} - \langle \Phi \rangle_{\mathcal{G}} \right| \le \epsilon \; .
\end{align*}

Furthermore, there exists an absolute constant $C \ge 0$ such that
for $\epsilon \in (0, 1/2]$ one can take
\begin{align}\label{eq:38a}
\tau := \left(\frac{(1-\rho^2)\epsilon}{\ell^{5/2}}
\right)^{C \frac{\ell \ln(\ell/\epsilon)\ln(1/\pi_{\ast})}{(1-\rho)\epsilon}} 
\; .
\end{align}
\end{theorem}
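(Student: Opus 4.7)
The plan is to establish this by adapting the classical invariance principle of~\cite{Mossel:10} along the lines developed in~\cite{HaHoMo:16}, combining smoothing with a coordinate-by-coordinate Gaussian replacement. First I would expand each $f^{(j)}$ as a multilinear polynomial in an orthonormal basis of $L^2(\Omega^n, \pi_j^n)$, writing $f^{(j)}(\underline{X}^{(j)}) = \sum_{\sigma} \hat{f}^{(j)}(\sigma)\,\chi_\sigma(\underline{X}^{(j)})$ for multi-indices $\sigma$. Apply Theorem~\ref{thm:smoothing} with $\gamma = \Theta\bigl((1-\rho)\epsilon/(\ell\ln(\ell/\epsilon))\bigr)$ to replace each $f^{(j)}$ by $T_{1-\gamma}f^{(j)}$ at a cost of at most $\epsilon/3$ in the noise correlation. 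The smoothed polynomials are effectively of degree $d = O(\gamma^{-1}\log(1/\epsilon))$ in the sense that the contribution of higher Efron--Stein levels is negligible.

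Next I would perform a hybrid argument, swapping the $n$ blocks $\overline{X}_i$ for their Gaussian analogues $\overline{G}_i$ one index at a time. Since $\mathcal{P}$ and $\mathcal{G}$ share first and second moments, a second-order Taylor expansion in the $i$-th block shows that the change in $\langle F\rangle$ from the $i$-th swap is controlled by the third-order term, which after hypercontractive estimates in the non-uniform product space (with $\pi_{\ast}$-dependence entering here) is bounded by $O\bigl((1-\gamma)^{3d}\,\bigl(\sum_j \Inf_i(f^{(j)})\bigr)^{3/2}\bigr)$ up to polynomial factors in $d$ and $\ell$. Summing over $i$ and using $\max_{i,j}\Inf_i(f^{(j)}) \le \tau$ together with $\sum_i \Inf_i(T_{1-\gamma}f^{(j)}) \le d$ yields total error $\tau^{1/2}\,d^{O(\ell)}$, which by the choice~(\ref{eq:38a}) is at most $\epsilon/3$. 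The outcome is Gaussian-side polynomials $\tilde{\phi}^{(j)}$ with $\EE[\tilde{\phi}^{(j)}] = \EE[f^{(j)}]$ satisfying $|\langle F\rangle_{\mathcal{P}} - \langle \tilde{\Phi}\rangle_{\mathcal{G}}| \le 2\epsilon/3$.

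The final step is to convert each $\tilde{\phi}^{(j)}$ (which may take values outside $[0,1]$) into a $[0,1]$-valued function with the same mean. Let $\mathrm{clip}:\bbR\to[0,1]$ be the $1$-Lipschitz truncation and set $\phi^{(j)} := \mathrm{clip}(\tilde{\phi}^{(j)}) + c_j$, then clip again if necessary, with $c_j$ chosen so that $\EE[\phi^{(j)}] = \EE[f^{(j)}]$. Because $f^{(j)} \in [0,1]$, the $L^2$-distance $\EE[(\tilde{\phi}^{(j)} - \mathrm{clip}(\tilde{\phi}^{(j)}))^2]^{1/2}$ is itself controlled by the invariance estimate and can be absorbed by tightening the $\epsilon/3$ budgets in the first two steps. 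Combined with the Cauchy--Schwarz bound $|\langle\tilde{\Phi}\rangle_{\mathcal{G}} - \langle\Phi\rangle_{\mathcal{G}}| \le \sum_j \|\tilde{\phi}^{(j)} - \phi^{(j)}\|_2$, this yields the claimed inequality.

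The main obstacle is the per-coordinate replacement in step two for the general, non-uniform, multi-valued product space: bounding the third-order term requires hypercontractivity for low-degree polynomials on $(\Omega, \pi_j)$ via the Wolff/Mossel--O'Donnell--Oleszkiewicz machinery, and this is precisely what forces the $\pi_{\ast}$-dependence and the exponent in~(\ref{eq:38a}). A secondary subtlety is that the truncation in step three must be performed so that the $[0,1]$-valued $\phi^{(j)}$ match the target means \emph{exactly}, which is why the first two steps must be carried out with a slightly stronger bound than $\epsilon$ to leave room for the mean-correction slack.
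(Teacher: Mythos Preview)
The paper does not give its own proof of Theorem~\ref{thm:invariance}: it is stated in Section~\ref{sec:general} as a prior result, with the remark that ``the statement below follows from the proof of Theorem~4.1 in~\cite{HaHoMo:16}'' (itself a quantitative version of the main theorem of~\cite{Mossel:10}). Your sketch---smoothing via Theorem~\ref{thm:smoothing}, Lindeberg-style coordinate replacement with hypercontractivity to control the third-order error, then truncation to $[0,1]$ with mean correction---is precisely the architecture of that cited proof, so there is nothing to contrast at the level of approach.

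One small caution on your write-up: in the hybrid step the per-coordinate error is not simply $\bigl(\sum_j \Inf_i(f^{(j)})\bigr)^{3/2}$ but rather involves a product of $L^q$-norms of the remaining $\ell-1$ factors times an $L^q$-bound on the $i$-th piece of one factor; the hypercontractive constant therefore enters with an exponent proportional to the effective degree $d$ and to $\ell$, which is what actually produces the form of~(\ref{eq:38a}). Also, the mean-correction step is usually handled not by adding a constant $c_j$ and re-clipping (which can fail to hit the target mean) but by bounding $\|\tilde{\phi}^{(j)}-\mathrm{clip}(\tilde{\phi}^{(j)})\|_1$ via the invariance estimate applied to a smooth test function, and then absorbing this discrepancy into the $\epsilon$ budget rather than insisting on exact equality of means; the statement only requires $\EE[\Phi]=\EE[F]$, which one recovers by a final averaging trick or by the observation that $\Gamma_{\mathcal{G}}$ is $1$-Lipschitz in $\mu$ (cf.~(\ref{eq:diffGamma})). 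These are refinements rather than gaps.
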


\subsection{Our Results} 
We can now state our results.

\begin{theorem}
\label{thm:multi}
Let $\overline{\underline{X}}$ be a random vector distributed
according to $(\overline{\underline{\Omega}}, \underline{\mathcal{P}})$ 
with $\rho(\mathcal{P}) \le \rho < 1$.

Then, for all $\epsilon > 0$, there exists an 
 $r = r(\epsilon, \rho, \pi_{\ast}, \ell)$ such that the following holds. 
%$\tau := \tau(\epsilon, \rho, \alpha, \ell) > 0$$
Let 
$f^{(1)}, \ldots, f^{(\ell)}: \underline{\Omega} \to [0, 1]$ be $(r,\eps/4 \ell)$ resilient, i.e., 
\begin{align} \label{eq:cond_res}
\Big| \EE \left[f^{(j)} | (\underline{X_i}^{(j)} : i \in S) = z \right] - \EE[f] \Big| \leq \frac{\eps}{4 \ell},
\end{align}
for all $j$, all sets $S$ with $|S| \leq r$ and all $z$.  
%then, for $\mu^{(j)} := \EE[f^{(j)}(\underline{X}^{(j)})]$:
Then  %, there exist functions $\Phi : \bbR \to [0,1]^{\ell}$ with $\EE[\Phi] = \EE[F]$ and such that 
\begin{align*}
 \langle F \rangle_{\mathcal{P}} \leq \Gamma_{\mathcal{G}}(\mu) +  \eps. 
\end{align*}

Furthermore, there exists an absolute constant $C \ge 0$ such that
for $\epsilon \in (0, 1/2]$ one can take
\begin{align}\label{eq:r_main}
r := \frac{4 \ell^2 \ln \ell/\epsilon}{\tau (1-\rho)\epsilon^2},
\end{align}
where $\tau$ is given in (\ref{eq:38a}). 
\end{theorem}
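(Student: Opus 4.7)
The plan is to mimic the proof of Theorem~\ref{thm:two}, replacing the low-noise truncation by the smoothing operator of Theorem~\ref{thm:smoothing}, the single-function decision tree by a \emph{correlated} decision tree that branches on one coordinate in all $\ell$ functions simultaneously, and the MIST bound by the multi-function invariance principle of Theorem~\ref{thm:invariance}. First, I replace $F$ by its smoothing $H = T_{1-\gamma} F$ with $\gamma = \Theta\bigl((1-\rho)\eps / (\ell \ln(\ell/\eps))\bigr)$. Theorem~\ref{thm:smoothing} bounds the cost at $\eps/4$ in the noisy inner product, and the quantitative gain is the total-influence estimate
\[
\sum_{j=1}^{\ell} I(h^{(j)}) \;=\; \sum_{j=1}^{\ell}\sum_{S \neq \emptyset} |S|\,(1-\gamma)^{2|S|}\,\bigl(\hat f^{(j)}(S)\bigr)^{2} \;=\; O\!\left(\frac{\ell}{\gamma}\right).
\]
Smoothing also preserves $(r,\eps/(4\ell))$-resilience: $\EE[h^{(j)} \mid X^{(j)}_S = z]$ is a convex combination of conditional expectations $\EE[f^{(j)} \mid X^{(j)}_{S'} = z']$ with $|S'| \le |S| \le r$, each within $\eps/(4\ell)$ of $\EE[f^{(j)}]$.

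Second, I build a correlated decision tree whose nodes are partial assignments of whole columns $\overline{X}_i \sim \mathcal{P}$. At a node indexed by $\overline{x}_I$, compute $\max_{j} \Inf_{i}(h^{(j)}_{\overline{x}_I})$ for each $i \notin I$; declare the node a (good) leaf if this maximum is below the invariance threshold $\tau$ of Theorem~\ref{thm:invariance}, else branch on $\overline{X}_{i^\ast}\sim \mathcal{P}$ where $i^\ast$ attains the max, and truncate all paths at depth $d$. Taking $\sum_j I(h^{(j)})$ as a potential, the identity
\[
\sum_j I(h^{(j)}_{\overline{x}_I}) \;=\; \sum_j \Inf_{i^\ast}(h^{(j)}_{\overline{x}_I}) \;+\; \EE_{\overline{X}_{i^\ast}\sim\mathcal{P}}\!\left[\sum_j I(h^{(j)}_{\overline{x}_{I\cup\{i^\ast\}}})\right]
\]
shows the expected potential drop per branching is at least $\tau$, so the expected depth of a path sampled under $\underline{\mathcal{P}}$ is $O(\ell/(\gamma\tau))$; taking $d = \Theta(\ell / (\gamma\tau\eps))$ ensures that at most an $\eps/4$ fraction of leaves (weighted by $\underline{\mathcal{P}}$) are truncated by depth rather than by the influence test, and yields the radius $r \ge d$ matching~(\ref{eq:r_main}).

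Third, decompose $\langle H\rangle_{\mathcal{P}}$ over the leaves, discard the bad mass at a cost of $\eps/4$, and at each good leaf $\overline{x}_I \in A$ apply Theorem~\ref{thm:invariance} to the residuals $h^{(j)}_{\overline{x}_I}$ (all with max influence $\le \tau$) to obtain
\[
\EE\!\left[\prod_{j=1}^{\ell} h^{(j)}(\underline{X}^{(j)}) \,\Big|\, \overline{X}_I = \overline{x}_I\right] \;\le\; \Gamma_{\mathcal{G}}\!\bigl(\EE[H_{\overline{x}_I}]\bigr) + \eps/4.
\]
By preserved resilience, $\EE[H_{\overline{x}_I}]$ lies within $\eps/(4\ell)$ coordinate-wise of $\mu = \EE[F]$, so~(\ref{eq:diffGamma}) gives $\Gamma_{\mathcal{G}}(\EE[H_{\overline{x}_I}]) \le \Gamma_{\mathcal{G}}(\mu) + \eps/4$. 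Averaging over good leaves and combining the four $\eps/4$ errors (smoothing, bad leaves, invariance, $\Gamma$-continuity) gives $\langle F\rangle_{\mathcal{P}} \le \Gamma_{\mathcal{G}}(\mu) + \eps$.

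The main obstacle is the correlated decision-tree step: a ``coordinate'' now corresponds to an entire $\ell$-tuple $(X_i^{(1)},\ldots,X_i^{(\ell)})$ distributed jointly under $\mathcal{P}$, so the single-function analysis of Lemma~\ref{lem:dec} must be adapted. The right potential is the sum $\sum_j I(h^{(j)})$, whose expected drop per branching step is at least $\max_j \Inf_{i^\ast}(h^{(j)}) > \tau$; this lets the telescoping of Lemma~\ref{lem:dec} go through uniformly in $\ell$. A secondary subtlety is checking that smoothing preserves resilience under conditioning on whole $\mathcal{P}$-columns; since each $h^{(j)}$ depends only on $\underline{X}^{(j)}$, conditioning on $\overline{X}_I$ reduces (for the purposes of $h^{(j)}$) to conditioning on $X^{(j)}_I$ alone, placing us in the single-marginal setting where the Boolean argument of Theorem~\ref{thm:two} applies verbatim.
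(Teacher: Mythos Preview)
Your proposal is correct and follows essentially the same route as the paper: smooth via Theorem~\ref{thm:smoothing}, build the correlated decision tree with the summed-influence potential (this is exactly Lemma~\ref{lem:dec2}), apply Theorem~\ref{thm:invariance} at the good leaves, and use resilience together with~(\ref{eq:diffGamma}) to pull $\Gamma_{\mathcal{G}}$ back to the original means. Your explicit verification that $T_{1-\gamma}$ preserves $(r,\eps/4\ell)$-resilience and that conditioning on a column $\overline{X}_I$ affects $h^{(j)}$ only through its $j$-th marginal $X^{(j)}_I$ are points the paper uses implicitly but does not spell out.
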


\subsection{The correlated decision tree} 
\begin{definition}
Given a vector of functions  $F= (f^{(1)},\ldots,f^{(\ell)})$, we define the {\em correlated decision tree} of $F$ as follows.   
Vertices of the trees are vectors of functions denotes $F_{\overline{X}_S = z}$ where $S \subset [n]$. 
$F_{\overline{X}_S = z}$ is the function $F$ where we restrict the values of $( \overline{X}_i : i \in S)$ to take the value $z$. 

If such a vertex is not a leaf, then it has $|\Omega|^k$ children of the form $F_{\overline{X}_T = z}$, where $T$ is a super-set of $S$ that contains one more one element and $z'$ ranges over all possible value with $z'_S = z$. 
The root of the tree is $F = F_{\emptyset}$. We write $I(F) = \sum_{i,j} I_i(f^{(j)})$. 
\end{definition} 

We can easily generalize lemma~\ref{lem:dec} to the following
\begin{lemma} \label{lem:dec2}
For any $\tau > 0, \eps > 0$ and $F$ as above there exists a decision tree of depth $d$  
\[
d \leq 4+ \frac{I(F)}{\tau \eps} 
\]
such that except for at most a fraction $\eps$ of the leaves (according to the $\mathcal{P}$ distribution) all leaves
$F_{\overline{X}_T = z}$ have that all of the coordinates have all influences bounded by $\tau$. 
\end{lemma}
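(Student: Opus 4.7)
The plan is to mimic the greedy construction used in Lemma~\ref{lem:dec} almost verbatim, but to split on entire columns $\overline{X}_i$ of the sample rather than on single coordinates. Starting from the root $F_{\emptyset} = F$, I process each node $F_{\overline{X}_S = z}$ at depth less than $d$ as follows: if some pair $(i,j) \in ([n] \setminus S) \times [\ell]$ satisfies $I_i(f^{(j)}_{\overline{X}_S = z}) > \tau$, pick such an $i^{*}$ (for concreteness maximizing $\sum_j I_{i^{*}}(f^{(j)}_{\overline{X}_S = z})$) and create the $|\Omega|^{\ell}$ children by conditioning on $\overline{X}_{i^{*}} = w$ for each $w \in \Omega^{\ell}$; otherwise declare the node a leaf. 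Any node reaching depth $d$ is also a leaf. By construction, every leaf except those forced by the depth cap already satisfies the uniform-influence bound, so the ``bad'' fraction is exactly the probability of hitting a leaf at the maximum depth $d$.

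The main technical step is a one-step decrease estimate for the potential $I(F) = \sum_{i,j} I_i(f^{(j)})$. For a single function $h : \Omega^n \to \bbR$ and coordinate $i$, the standard identity $\EE_{x_i \sim \pi}[I(h_{x_i})] \leq I(h) - I_i(h)$ holds by combining $I_i(h_{x_i}) = 0$ with $\EE_{x_i}[I_k(h_{x_i})] \leq I_k(h)$ for $k \neq i$. Applying this to each $f^{(j)}$ and using that $\overline{X}_{i^{*}} \sim \mathcal{P}$ has $\pi_j$ as its $j$-th marginal, summation over $j$ yields
\[
\EE_{\overline{X}_{i^{*}} \sim \mathcal{P}}\bigl[I(F_{\overline{X}_{i^{*}}})\bigr] \leq I(F) - \sum_j I_{i^{*}}(f^{(j)}) \leq I(F) - \tau,
\]
whenever the splitting rule fires, because by the rule some single term $I_{i^{*}}(f^{(j)})$ already exceeds $\tau$.

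With this in hand I would telescope along the random path taken by $\overline{X} \sim \underline{\mathcal{P}}$. Letting $\cD(\overline{X})$ be its leaf depth and $V_t$ its position at level $\min(t, \cD(\overline{X}))$, iterating the one-step bound (which contributes at most $-\tau$ on $\{\cD \geq t\}$ and $0$ on its complement) gives $\EE[I(F_{V_d})] \leq I(F) - \tau \sum_{t=1}^d \PP[\cD(\overline{X}) \geq t]$. Non-negativity of $I$ and monotonicity of $\PP[\cD \geq t]$ in $t$ then yield $\PP[\cD \geq d] \leq I(F)/(\tau d)$, so taking $d = \lceil I(F)/(\tau \eps) \rceil$, which trivially satisfies $d \leq 4 + I(F)/(\tau \eps)$, forces the bad-leaf probability below $\eps$, which is the claim.

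I do not expect a serious obstacle: this is essentially a direct lift of Lemma~\ref{lem:dec} to the correlated-tree setting. The only point that deserves care is the averaging identity above; although the entries of the column $\overline{X}_{i^{*}}$ are jointly correlated through $\mathcal{P}$, each $f^{(j)}$ depends only on its $j$-th entry, whose marginal is $\pi_j$, so the single-function bound applies termwise and the correlations in $\mathcal{P}$ drop out after summation.
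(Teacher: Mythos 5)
Your proposal is correct and is exactly the intended generalization: the paper only states that Lemma~\ref{lem:dec2} ``easily generalizes'' Lemma~\ref{lem:dec}, and your construction (splitting on whole columns $\overline{X}_{i^*}$, potential $I(F)=\sum_{i,j}I_i(f^{(j)})$, one-step drop of at least $\tau$, telescope along a random $\underline{\overline{\mathcal{P}}}$-path) is precisely that generalization. The only point that genuinely needs a word is the one you flagged, namely that the joint $\mathcal{P}$-correlation of the column drops out because $f^{(j)}$ sees only the $j$-th marginal $\pi_j$ of $\overline{X}_{i^*}$, and you handle it correctly; your telescoping even gives the slightly sharper depth $d\leq 1+I(F)/(\tau\eps)$, which of course implies the stated $d\leq 4+I(F)/(\tau\eps)$.
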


We can now prove Theorem~\ref{thm:multi}.
\begin{proof}
Fix $\eps > 0$. By Theorem~\ref{thm:smoothing} it suffices to prove the statement of the Theorem with error $\eps/2$ for the functions
$T_{1-\gamma} f^{(i)}$ where 
\[
\gamma = \frac{(1-\rho)\epsilon}{4 \ell \ln \ell/\epsilon}.
\]
We will slightly abuse notation are write $f^{(i)}$ for $T_{\gamma} f^{(i)}$. We also write $F = (f^{(1)},\ldots,f^{(\ell)})$. 
Consider a decision tree for $F$ of depth $d$ such that except for $0.01 \eps$ of the leaves, all leaf function have all influences bounded by $\tau$, where  
$\tau$ is given in~(\ref{eq:38a}) for error $0.01 \eps$ (this can be achieved by choosing a different value of $C$ in (\ref{eq:38a})). 
We note that 
\[
I(F) \leq \frac{4 \ell^2 \ln \ell/\epsilon}{(1-\rho)\epsilon}. 
\]
Using the decision tree from lemma~\ref{lem:dec2} with 
\[
d \leq 4+ \frac{I(F)}{\tau \eps} \leq 4 + \frac{4 \ell^2 \ln \ell/\epsilon}{\tau (1-\rho)\epsilon^2}, 
\]
we next evaluate 
\begin{equation} \label{eq:multi_de}
\EE \left[ \prod_{j=1}^\ell f^{(j)}(\underline{X}^{(j)}) \right] = 
\EE \Big[ \EE \left[ \prod_{j=1}^\ell f^{(j)}(\underline{X}^{(j)})  \Big| \overline{X}_S = z \right] \Big].
\end{equation} 
For all leaves of the tree the conditional expectation is bounded by $1$ and except with probability $0.01 \eps$, the vector function 
$F_{\overline{X}_T = z}$ has all influences bounded by $\tau$. For such functions by Theorem~\ref{thm:invariance} it holds that 
\begin{eqnarray*}
 \EE \left[ \prod_{j=1}^\ell f^{(j)}(\underline{X}^{(j)}) \Big| \overline{X}_S = z \right] &\leq&  
 \Gamma_{\mathcal{G}}\left( \EE[f^{(j)}(\underline{X}^{(j)}) | \overline{X}_S = z] : 1 \leq j \leq \ell \right) \\ &\leq&  
 \Gamma_{\mathcal{G}}\left( \EE[f^{(j)}]: 1 \leq j \leq \ell \right) + \eps/4,  
\end{eqnarray*}
where the last inequality uses the fact that the functions $f^{(j)}$ are resilient, i.e., (\ref{eq:cond_res}) and (\ref{eq:diffGamma}). 
It thus follows that the LHS of (\ref{eq:multi_de}) is bounded by 
\[
\Gamma_{\mathcal{G}}\left( \EE[f^{(j)}]: 1 \leq j \leq \ell \right) + \eps/4 + 0.01 \eps < \Gamma_{\mathcal{G}}\left( \EE[f^{(j)}]: 1 \leq j \leq \ell \right) + \eps/2,
\]
as needed. 
\end{proof}

\subsection{Two Functions}
While the results of this section are more general than the special case considered earlier, they are weaker in requiring that all functions are resilient.
For completeness we state the following theorem whose proof is essentially identical to the proof of Theorem~\ref{thm:two}.

\begin{theorem} \label{thm:two_general}
For every $\eps > 0, 0 \leq \rho < 1$, there exist $r,\alpha> 0$ for which the following holds. 
Let $\overline{\underline{X}} = (\underline{X}^{(1)},\underline{X}^{(2)})$ be a random vector distributed
according to $(\overline{\underline{\Omega}}, \underline{\mathcal{P}})$ 
with $\rho(\mathcal{P}) \le \rho < 1$.
Let $f : \{-1,1\}^n \to [0,1]$ that is $(r,\alpha)$-resilient and let $g :  \{-1,1\}^n \to [0,1]$ be an arbitrary function. 
Then 
\[
\langle f, g \rangle_{\mathcal{P}} \leq \langle \chi_{\mu_f}, \chi_{\mu_g} \rangle_{\rho}+ \eps. 
\]
One can take
\begin{equation} \label{eq:r_alpha}
r = O \left(\frac{1}{\eps^2(1-\rho)\tau} \right), \alpha = O\left(\eps 2^{-r} \right), 
\end{equation} 
where on can take 
\begin{equation} \label{eq:tau_general}
\tau = \eps^{O\left( \frac{\log(1/\mu) \log(1/\eps) \log(1/(1-\rho))}{(1-\rho) \eps} \right)},
\end{equation} 
where $\mu$ is the minimal weight of an atom of $\Omega$. 
\end{theorem}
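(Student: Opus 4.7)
The plan is to mirror the proof of Theorem~\ref{thm:two} step by step, substituting each Boolean-specific ingredient by its general-$\cP$ counterpart from Section~\ref{sec:general}.

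In place of Lemma~\ref{lem:eta}'s reduction to $T_\eta f$, I would apply Theorem~\ref{thm:smoothing} with $\gamma = \Theta((1-\rho)\eps / \log(1/\eps))$, paying $0.01\eps$ in error to reduce the problem to bounding $\langle h, g\rangle_\cP$ where $h := T_{1-\gamma} f$. Because $\EE[h \mid \underline{X}^{(1)}_S = x_S]$ is a convex combination of values $\EE[f \mid \underline{X}^{(1)}_S = z]$ (a routine check, since $h$ is obtained from $f$ by coordinate-wise random refreshing), the $(r,\alpha)$-resilience of $f$ transfers to $h$ with the same parameters, and a Fourier computation gives $I(h) = O(1/\gamma)$.

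Next, I would build a decision tree for $h$ on $\Omega^n$ by iteratively splitting on the coordinate of maximum influence, producing $|\Omega|$ children per internal node with branching probabilities $\pi_1$. The recursion
\[
I(h_{x_S}) = I_j(h_{x_S}) + \EE_{X_j \sim \pi_1}[I(h_{x_S \cup\{j\}})]
\]
underlying Lemma~\ref{lem:dec} still holds, so the same accounting yields a tree of depth $d = O(I(h)/(\tau\eps)) = O(1/(\eps^2 (1-\rho)\tau))$ in which all but a $0.01\eps$-fraction of leaves have every coordinate influence of $h$ at most $\tau$. Choosing $r \ge d$ and $\alpha \le 0.01\eps \cdot 2^{-d}$ then ensures via resilience that $\EE[h_{x_S}] \le \mu_f + 0.01\eps$ at every leaf.

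At each good leaf I would invoke a general-$\cP$, one-sided two-function MIST stating that if $h' : \Omega^{n'} \to [0,1]$ has maximum influence at most $\tau$ and $g' : \Omega^{n'} \to [0,1]$ is arbitrary, then $\EE_\cP[h'(X^{(1)}) g'(X^{(2)})] \le \langle \chi_{\mu_{h'}}, \chi_{\mu_{g'}}\rangle_\rho + 0.01\eps$. Decomposing $\langle h, g\rangle_\cP$ as a tower of expectations over leaves exactly as in the Boolean proof, summing the $0.01\eps$ mass from bad leaves, and applying Borell's Lemma~\ref{lem:Borell} to push the averaged $\chi_{\mu(y_S)}$ inside the inner product together with Lemma~\ref{lem:gaussian_continuous} to absorb the $O(\eps)$ drift in $\mu_f$, gives the desired bound $\langle \chi_{\mu_f}, \chi_{\mu_g}\rangle_\rho + \eps$.

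The main obstacle I anticipate is establishing the one-sided MIST above for general $\cP$, since Theorem~\ref{thm:invariance} assumes every function has low influence. My approach would be to run the hybrid/replacement argument of \cite{Mossel:10} only on the $X^{(1)}$ side, Gaussianizing $h'$ into some $\phi(G^{(1)})$ of the same mean: the per-coordinate Taylor error depends only on the low-influence hypothesis for $h'$ and on the spectral gap $\rho(\cP) < 1$, treating $g'(X^{(2)})$ as an abstract bounded test functional with no regularity assumption. Borell's inequality on the Gaussian side then bounds the resulting Gaussian correlation by the threshold bound at correlation $\rho(\cP)$, which Lemma~\ref{lem:gaussian_continuous} shows is within $O(\eps)$ of the threshold bound at $\rho$.
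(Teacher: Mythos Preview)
Your proposal is essentially the same route the paper intends: the paper does not spell out a proof of Theorem~\ref{thm:two_general} but simply declares it ``essentially identical to the proof of Theorem~\ref{thm:two}'', and your substitutions (Theorem~\ref{thm:smoothing} for Lemma~\ref{lem:eta}, a $|\Omega|$-ary influence decision tree for Lemma~\ref{lem:dec}, and a general one-sided MIST at the leaves) are exactly the right generalizations.

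One clarification on the step you flag as an obstacle. The one-sided low-influence bound you need at each leaf is not something you have to re-derive from Theorem~\ref{thm:invariance}: Proposition~1.15 of \cite{Mossel:10}, which the paper already cites as Theorem~\ref{thm:MIST} in the Boolean case, is in fact stated and proved there for general finite product spaces with $\rho(\mathcal{P})<1$, and its $\tau$ depends on the minimum atom weight exactly as in~(\ref{eq:tau_general}). So the paper is treating this ingredient as a black box from \cite{Mossel:10}, and you can too; your one-sided hybrid sketch is roughly how it is done there, but you need not reprove it. With that in hand, the rest of your argument (leaf-wise application, the $\psi$-averaging trick via Lemma~\ref{lem:Borell}, and the continuity from Lemma~\ref{lem:gaussian_continuous}) goes through verbatim.
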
 

%Using Lemma~\ref{lem:dec2} one can prove a variant of Theorem~\ref{thm:main}, assuming that both function $f,g$ are resilient.  In this case, there is no need to apply lemma~\ref{lem:Borell}. %
% One advantage of lemma~\ref{lem:dec2} is that it easily generalized to multiple functions that are correlated in an arbitrary fashion. 

%%%%%%%%%%%%%%%%%%%%%%%%%%%%%%%%%%%%%%%%%%%%%%%%%%%%%%%
\section{Interaction Between Fourier Spectrums} \label{sec:intersect} 
%%%%%%%%%%%%%%%%%%%%%%%%%%%%%%%%%%%%%%%%%%%%%%%%%%%%%%%%%%%%%%%%%%%%%%%%%%%

\subsection{The Fourier Decision Tree} 

The proof below will require the {\em Steele-Efron-Stein} decomposition of 
$f : \Omega^n \to \bbR$: 
\[
f = \sum_{S \subset [n]} f_S,
\]
whose properties can be found for example in~\cite{Mossel:10}. We recall that $f_{x_S}$ denote the function $f$ where the variables in 
the set $S$ are restricted to the values $x_S$, while $f_S$ is a function of all the variables in $S$. 

\begin{definition}
Given parameters $(r,\alpha)$ and a function $f : \Omega^n \to \bbR$, the $(r,\alpha)$-{\em Fourier support of} $f$ is all variables $i$ such that $i \in S$,
where $|S| \leq r$ and such that $\Var[f_S] \geq \alpha^2$: 
\[
\supp_{(r,\alpha)}(f) := \{ i : \exists S \subset [n], i \in S, 0 < |S| \leq r, \Var[f_S] \geq \alpha^2 \}.
\]
We say that two functions $f$ and $g$ are $(m,\alpha)$-{\em cross-resilient} if 
\[
\supp_{(r,\alpha)}(f) \cap \supp_{(r,\alpha)}(g) = \emptyset.
\]
\end{definition} 

Note that if $f$ is $(r,\alpha)$-resilient and $0 < |S| \leq r$ then 
\[
Var[f_S] \leq \frac{1}{4} (\max f_S - \min f_S)^2 \leq \max \big( (\max f_S - \EE[f])^2, (\EE[f] - \min f_S)^2 \big) \leq \alpha^2.
\]
Therefore $(f,g)$ are $(r,\alpha)$-cross-resilient for any $g$. 
Next we will strengthen Theorem~\ref{thm:two} to prove the following. 

\begin{theorem} \label{thm:three}
For every $\eps > 0, 0 \leq \rho < 1$, there exist $m,\beta> 0$ for which the following holds. 
Let $f,g : \{-1,1\}^n \to [0,1]$ such that $f$ and $g$ are $(m,\beta)$-cross-resilient 
Then 
\[
\langle f, g \rangle_{\rho} \leq \langle \chi_{\mu_f}, \chi_{\mu_g} \rangle_{\rho}+ \eps. 
\]
One can take
\[
m = O(r/\eps \alpha^2), \quad \beta = \Omega (\mu^m 2^{-m} \eps),
\] 
where $r$ is chosen as in~(\ref{eq:r_main}). 
\end{theorem}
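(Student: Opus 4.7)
The plan is to mimic the proof of Theorem~\ref{thm:two}, but to make the decision tree a tool for establishing resilience of $f$ (rather than low influence), and to run the Borell-averaging argument on the $g$-side instead of the $f$-side. Write $A := \supp_{(m,\beta)}(f)$ and $B := \supp_{(m,\beta)}(g)$; by hypothesis $A \cap B = \emptyset$. First I will reduce to a smoothed version $h := T_{\eta} f$ with $\eta = 1 - \Theta(\eps(1-\rho))$ exactly as in Lemma~\ref{lem:eta}, which costs $O(\eps)$ via Lemma~\ref{lem:gaussian_continuous} and ensures that low-degree Fourier truncations of $h$ approximate $h$ well.

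Next, the heart of the proof: I will build a decision tree for $h$ that only queries variables in $A$, has depth at most $d \leq m$, and such that at every leaf $x_I$ the restricted function $h_{x_I}$ is $(r,\alpha)$-resilient in the coordinates of $[n]\setminus I$, where $(r,\alpha)$ are the parameters from Theorem~\ref{thm:two}. The construction is greedy: at each non-resilient node pick a set $S$ of size $\le r$ witnessing a violation of resilience and query a variable of $S$ lying in $A$ (such a variable exists because only variables in $A$ can carry low-degree Fourier mass above $\beta$). Bounded depth is obtained by a potential argument: the low-degree Fourier $\ell^2$-mass of $h_{x_I}$ on sets intersecting $A \setminus I$ decreases in expectation by at least $\Omega(\alpha^2)$ each time we query a witnessing variable, giving $d = O(r/(\eps \alpha^2))$, which matches the claimed value of $m$. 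Moreover, once $|I| \leq d \leq m-r$, every $V \subset [n]\setminus I$ with $0<|V| \leq r$ and $U \subset I$ satisfies $|V\cup U|\le m$ and $V \cup U \not\subset A$, so by cross-resilience $|\hat f(V\cup U)| < \beta$. Thus $|\widehat{h_{x_I}}(V)| \leq 2^{d}\beta$, and by the estimate recalled in the introduction $h_{x_I}$ is $(r, 2^{r+d}\beta)$-resilient. Choosing $\beta = \Theta(\eps 2^{-r-d}) = \Omega(\mu^m 2^{-m} \eps)$ makes this stronger than the resilience requirement of Theorem~\ref{thm:two}.

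At each leaf, condition additionally on the correlated $y_I$; the remaining coordinates form an independent $\rho$-correlated pair, so Theorem~\ref{thm:two} applied to $(h_{x_I}, g_{y_I})$ gives
\[
\langle h_{x_I}, g_{y_I}\rangle_{\rho} \;\leq\; \langle \chi_{\mu(x_I)},\chi_{\mu(y_I)}\rangle_{\rho} + \eps/8,
\]
where $\mu(x_I) := \EE[h|x_I]$ and $\mu(y_I) := \EE[g|y_I]$. The crucial use of cross-resilience is on the $y$-side: since $I\subset A$ is disjoint from $B$ and $|I|\leq m$, the Fourier expansion $\mu(y_I) - \mu_g = \sum_{\emptyset \neq U \subset I} \hat g(U)\, y_U$ has each term of magnitude at most $\beta$, hence $|\mu(y_I) - \mu_g| \leq 2^{d}\beta \leq \eps/8$. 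Applying Lemma~\ref{lem:gaussian_continuous} lets me replace $\chi_{\mu(y_I)}$ by the constant-across-leaves $\chi_{\mu_g}$ at a cost of $O(\eps)$.

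With $\chi_{\mu_g}$ independent of $(x_I,y_I)$, I pull it out of the averaging exactly as in Theorem~\ref{thm:two}: the averaged function $\phi := \sum_{x_I}\PP[x_I]\,\chi_{\mu(x_I)}$ is $[0,1]$-valued with $\EE[\phi] = \EE_{x_I}[\mu(x_I)] = \mu_f$, and Lemma~\ref{lem:Borell} then gives $\langle \phi, \chi_{\mu_g}\rangle_{\rho} \leq \langle \chi_{\mu_f},\chi_{\mu_g}\rangle_{\rho}$. Summing all $O(\eps)$ errors (smoothing, Theorem~\ref{thm:two} at each leaf, cross-resilience replacement, and a small contribution from bad leaves) completes the proof.

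\paragraph{Main obstacle.}
The technical heart of the argument is the decision-tree construction: one must simultaneously (i) restrict queries to $A$ so that cross-resilience of $g$ applies to every leaf-level $I$, (ii) bound the depth by $m = O(r/(\eps\alpha^2))$ even though conditioning can amplify individual Fourier coefficients, and (iii) guarantee that the leaves attain $(r,\alpha)$-resilience (not merely low influence). Identifying the right monotone potential — the low-degree Fourier $\ell^2$-mass supported on the as-yet-unqueried part of $A$ — and verifying that each query strictly decreases it by $\Omega(\alpha^2)$ is where the delicate bookkeeping sits. Once the tree is available, steps 3--5 are direct adaptations of the averaging argument from the proof of Theorem~\ref{thm:two}.
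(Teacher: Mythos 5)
Your proposal follows the same architecture as the paper's proof: build a decision tree for $f$ whose queried variables land in $\supp(f)$, use cross-resilience to keep $\EE[g\mid y_I]$ near $\mu_g$ at every leaf, apply the resilient two-function theorem at each good leaf, and finish by Borell-averaging the $f$-side. That is exactly the paper's route via Lemma~\ref{lem:dec_fourier}. However, there are concrete gaps in the decision-tree construction, which is the heart of the matter.

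\textbf{The potential argument is not justified.} You claim that querying a single variable from a witnessing set $S$ decreases the low-degree Fourier $\ell^2$-mass ``by at least $\Omega(\alpha^2)$.'' That is false in general: the guaranteed variance drop comes from conditioning on the \emph{entire} witnessing set $T$, via the conditional-variance identity $\Var[f_{x_S}] = \sum_{T'\subseteq T}\Var[f_{x_S,T'}] + \EE[\Var[f_{z_{S\cup T}}]] \geq \alpha^2 + \EE[\Var[f_{z_{S\cup T}}]]$. Conditioning on one coordinate of $T$ gives no such guarantee. The paper's Lemma~\ref{lem:dec_fourier} therefore queries all of $T$ in one superstep (each of length $\leq r$), and counts supersteps, giving $d \leq r\bigl(1 + \tfrac{1}{\alpha^2\eps}\bigr)$. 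You should adopt the same bookkeeping; the ``$\ell^2$-mass on $A\setminus I$'' potential as you describe it will not give a per-query drop.

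\textbf{The leaf-resilience sentence is wrong.} You write: once $|I| \leq m-r$, every $V\subset[n]\setminus I$ with $|V|\leq r$ and $U\subset I$ satisfies $V\cup U\not\subset A$, hence $|\hat f(V\cup U)|<\beta$. There is no reason for $V\cup U\not\subset A$: $V$ can perfectly well lie inside $A\setminus I$. More to the point, this step is not what you need. Leaves are $(r,\alpha)$-resilient because the \emph{stopping rule} stops precisely when no witnessing set remains; cross-resilience of $(f,g)$ is used only for the $g$-side estimate $|\EE[g\mid y_I]-\mu_g|\leq 2^{|I|}\beta$, which requires only $I\subset\supp_{(m,\beta)}(f)$ and $\supp_{(m,\beta)}(f)\cap\supp_{(m,\beta)}(g)=\emptyset$. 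To establish $I\subset\supp_{(m,\beta)}(f)$ you must actually prove the paper's third bullet of Lemma~\ref{lem:dec_fourier}: if $i\in I$ was queried, then some low-degree Efron--Stein component of the \emph{original} $f$ containing $i$ has variance $\gtrsim \pi_\ast^d 2^{-d}\alpha^2$; a resilience violation of the \emph{restricted} $f_{x_I}$ does not give this for free, because the restricted Fourier coefficient $\widehat{f_{x_I}}(W)=\sum_{U\subset I}\hat f(W\cup U)\,x_U$ can be large while individual $\hat f(W\cup U)$ are small. The paper gets the $2^{-d}$ and $\pi_\ast^d$ losses from a union bound and the conditional-variance formula; this cannot be asserted by saying ``only variables in $A$ can carry low-degree Fourier mass above $\beta$.''

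\textbf{Minor.} The initial $T_\eta$-smoothing is unnecessary here. Unlike the influence decision tree (where $I(f)$ could be $\Theta(n)$), the Fourier decision tree is driven by $\Var[f]\leq 1$, which is already bounded; the paper runs the tree on $f$ directly. The smoothing only adds error terms. Also, your introductory sentence says you will run Borell averaging ``on the $g$-side,'' but your actual plan averages $\chi_{\mu(x_I)}$, which is the $f$-side (correctly matching the paper); just a wording slip.
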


\begin{lemma} \label{lem:dec_fourier}
Let $\Omega$ be a finite probability space with minimum atom probability $\pi_{\ast}$. 
Let $f : \Omega^n \to [0,1], r \in \bbN$ and $\alpha > 0, \eps > 0$. Then there exists a Fourier decision tree for $f$ with 
the following properties:
\begin{itemize}
\item The depth of the tree $d$ is a most $r(1 + \frac{1}{\alpha^2 \eps})$.
\item Given a random path from the root, the probability that it ends at an $(m,\alpha)$-resilient function is at least $1-\eps$. 
\item If $f_{x_S}$ is a node of the decision tree and $i \in S$ then $i \in \supp_{(d, \pi_{\ast}^d 2^{-d} \alpha)}(f)$. 
\end{itemize} 
\end{lemma}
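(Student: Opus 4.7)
The plan is to construct the tree greedily. Starting with $f$ at the root, at a node $f_{x_S}$ we declare it a leaf if $f_{x_S}$ is $(r,\alpha)$-resilient; otherwise, using a witnessing set $T\subseteq [n]\setminus S$ with $|T|\le r$ and a value $z$ for which $|\EE[f_{x_S}\mid x_T=z]-\EE[f_{x_S}]|>\alpha$, we branch on all $|\Omega|^{|T|}$ possible values of $x_T$. An absolute cutoff on the number of expansions along any path will furnish the stated depth bound.

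For the depth and resilience claims I would use a law-of-total-variance potential argument. Set $\Phi=\sum_{\ell}p_\ell\Var[f_\ell]$, where the sum ranges over current leaves and $p_\ell$ is the probability of reaching $\ell$. Initially $\Phi\le 1/4$. Expanding a non-resilient leaf $\ell$ at $f_{x_S}$ decreases $\Phi$ by
\[
p_\ell\cdot\Var[\EE[f_{x_S}\mid x_T]]\ge p_\ell\cdot\PP[x_T=z]\cdot\alpha^2\ge p_\ell\,\pi_{\ast}^r\alpha^2,
\]
by the witness inequality. Summing across all expansion events, the expected number $N$ of expansions along a random root-to-leaf path satisfies $\EE[N]=\sum_{\text{internal }v}p_v=O(1/(\pi_{\ast}^r\alpha^2))$. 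Markov's inequality then bounds the probability that a random path is still at a non-resilient leaf when it hits the cutoff, yielding the second item and the stated depth bound (after scaling through the $\pi_{\ast}^r$ factors).

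For the Fourier-support condition, I would connect each branching event to Fourier mass of the original $f$. At a non-resilient node $f_{x_S}$ with witness $T,z$, Cauchy--Schwarz on the witness inequality produces a nonempty $W\subseteq T$ with $\Var[(f_{x_S})_W]\ge \pi_{\ast}^{r} 2^{-|T|}\alpha^2$, where $(f_{x_S})_W$ is an Efron--Stein component of the restriction. Expanding $(f_{x_S})_W$ as a linear combination of the Efron--Stein components $f_{V\cup W}$ of the original $f$ over subsets $V\subseteq S$, and pigeonholing across the at most $|\Omega|^{|S|}$ terms, recovers some $U=V\cup W$ with $i\in W\subseteq U$, $|U|\le|S|+r\le d$, and $\Var[f_U]\ge(\pi_{\ast}^d 2^{-d}\alpha)^2$. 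Hence $i\in\supp_{(d,\pi_{\ast}^d 2^{-d}\alpha)}(f)$.

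The main obstacle is the Fourier-support bookkeeping: one must carefully expand the Efron--Stein components of the restriction $f_{x_S}$ as weighted sums of components of $f$ in a character-like basis on $\Omega$, then pigeonhole while tracking the distribution-dependent weights in order to obtain the claimed $\pi_{\ast}^d 2^{-d}$ factor. A minor subtlety is padding $|T|$ up to exactly $r$ when necessary, which is harmless since additional witness coordinates can always be absorbed.
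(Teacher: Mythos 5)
Your construction differs from the paper's in a crucial way: you branch on a \emph{resilience witness} (a set $T$ and point $z$ with $|\EE[f_{x_S}\mid x_T=z]-\EE[f_{x_S}]|>\alpha$), whereas the paper expands a node whenever some Efron--Stein component $f_{x_S,T}$ with $|T|\le r$ satisfies $\Var[f_{x_S,T}]\ge\alpha^2$. This choice makes the resilience of the surviving leaves immediate, but it creates two genuine problems.

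First, the depth bound. From a pointwise witness you can only certify the variance drop
\[
\Var\bigl[\EE[f_{x_S}\mid x_T]\bigr]\ge \PP[x_T=z]\cdot\alpha^2\ge\pi_{\ast}^{r}\alpha^2,
\]
so your Markov argument bounds the expected number of expansions by $O(1/(\pi_{\ast}^{r}\alpha^2))$ and the depth by $O\bigl(r/(\pi_{\ast}^{r}\alpha^2\eps)\bigr)$. That is exponentially worse than the claimed $r(1+\frac{1}{\alpha^2\eps})$, and the concluding phrase ``after scaling through the $\pi_{\ast}^{r}$ factors'' does not repair it: the $\pi_{\ast}^{r}$ sits in the exponent of the depth, and the $\beta$ of Theorem~\ref{thm:three} is in turn exponentially small in the depth, so the loss propagates. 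The paper avoids the $\pi_{\ast}^{r}$ entirely because its branching criterion gives $\Var[\EE[f_{x_S}\mid x_T]]=\sum_{T'\subseteq T}\Var[f_{x_S,T'}]\ge\Var[f_{x_S,T}]\ge\alpha^2$ with no probability factor.

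Second, the Fourier-support bullet. After branching on $T$ you add \emph{every} coordinate of $T$ to the restriction set $S$, so the third bullet must hold for every $i\in T$. But your Cauchy--Schwarz/pigeonhole step on the witness produces a single $W\subseteq T$ with large $\Var[(f_{x_S})_W]$, and then a single $U\supseteq W$; nothing forces $i\in W$ for an arbitrary $i\in T$, yet you assert ``$i\in W\subseteq U$.'' Coordinates in $T\setminus W$ are simply not covered. In the paper this is automatic: the branching set $T$ is itself the index of the large Efron--Stein component, and when $f_{x_S,T}$ is expanded over the global components $f_U$ with $T\subseteq U\subseteq T\cup S$, every term satisfies $T\subseteq U$, so every $i\in T$ lands in the chosen $U$.

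Both gaps trace to the same source. If you switch the branching criterion to ``some $\Var[f_{x_S,T}]\ge\alpha^2$'' you recover the variance drop of $\alpha^2$ and the containment $T\subseteq U$, and the rest of your plan (the potential $\Phi=\sum_\ell p_\ell\Var[f_\ell]$, Markov, and the $\pi_{\ast}^{d}$ factor from lifting $\Var[f_U(x_S,\cdot)]$ back to $\Var[f_U]$) goes through essentially as in the paper.
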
 

\begin{proof}
We expand a node $f_{x_S}$ in the decision tree, if in its Efron-Stein-Steele decomposition, there is a set $T \subset [n] \setminus S$, such that 
$\Var[f_{x_S, T}] \geq \alpha^2$.  
Note that if we expand the node $f_{x_S}$ and its possible extensions are $(f_{z_{S \cup T}})$ then the conditional variance formula implies that 
\begin{eqnarray*}
\Var[f_{x_S}] & = & \Var[\EE[f_{z_{S \cup T}}]] + \EE[\Var[f_{z_{S \cup T}}]] = \sum_{T' \subseteq T} \Var[f_{x_S,T'}]  + \EE[\Var[f_{z_{S \cup T}}]] \\ 
&\geq& \Var[f_{x_S,T}] + \EE[\Var[f_{z_{S \cup T}}]] \geq \alpha^2 + \EE[\Var[f_{z_{S \cup T}}]].
\end{eqnarray*}
It easily follows that if $L$ is the set of leaves of the tree and if $D(\ell)$ denotes the depth of leaf $\ell$ then 
\[
\Var[f] \geq \alpha^2 \sum_{\ell \in L} 2^{-\ell} D(\ell/r).
\]
Therefore if $p$ is the fraction of paths that reach level $d$ then 
\[
1 \geq \Var[f] \geq (d/m-1) \tau \alpha^2 \implies p \leq \frac{1}{ (d/r-1) \alpha^2},
\]
and all other leaves are $(r,\alpha)$-resilient. 
Taking $d -1$ to be the smallest integer that is greater or equal to $\frac{m}{\alpha^2 \eps}$, we obtain the desired 
bound on the depth of the tree: $d \leq r(1 + \frac{1}{\alpha^2 \eps})$. 

Consider a new node added to the tree: $f_{z_{S \cup T}}$. This node was added since 
$\Var[f_{x_S,T}] \geq \alpha^2$. Note that if the original Efron-Stein-Steele decomposition of $f = \sum_U f_S$, then 
\[ 
f_{x_S,T}(x_{\setminus S}) = \sum_{U : T \subseteq U \subseteq T \cup S}  f_U(x_S,x_{\setminus S}),
\]
and therefore 
\[
\Var[f_{x_S,T}(x)] = \Var[\sum_{U : T \subseteq U \subseteq T \cup S}  f_U(x_S,x_{\setminus S})] \leq 
2^d \sum_{U : T \subseteq U \subseteq T \cup S} \Var[ f_U(x_S,x_{\setminus S})]. 
\] 
%\[
%\widehat{f_{x_S}}(T) = \EE_y[\sum_{U} \hat{f}(U) x_{S \cap U} y_{U \setminus S} y_T] = 
%\sum_{U : U \setminus S = T} x_{S \cap U} \hat{f}(U). 
%\]
%and that the sum above contains at most $2^d$ terms and 
Therefore at least one of the terms $\Var[ f_U(x_S,x_{\setminus S})]$ is bounded below by 
$2^{-d} \alpha^2$. 
Using the conditional variance formula again, we get that 
\[
\Var[ f_U] \geq \pi_{\ast}^d \Var[ f_U(x_S,x_{\setminus S})], 
\]
and the proof of the lemma follows.

\end{proof}

We now prove Theorem~\ref{thm:three}. The proof is similar to the proof of Theorem~\ref{thm:two}, where we replace the application of Theorem~\ref{thm:MIST} with Theorem~\ref{thm:two} and the influence decision tree with the Fourier decision tree. 

\begin{proof} 
Choose $\alpha$ and $r$ such that the conclusion of Theorem~\ref{thm:two_general} holds for a function 
$f$ that is $(r,\alpha)$ resilient with error bounded by $0.01 \eps$. 
Apply lemma~\ref{lem:dec_fourier} to construct a Fourier decision tree for $f$ where except for $0.01 \eps$ of the leaves, all the leaves are $(r,\alpha)$-resilient. Note that the depth $d$ of the tree is bounded by 
$m = O(r/\eps \alpha^2)$. We now claim that the statement of the theorem holds if $f,g$ are $(m,\beta)$-cross-resilient, 
where $\beta = \min(\pi_{\ast}^m 2^{-m} \alpha,0.01 \times \pi_{\ast}^m 2^{-m} \eps)$. 

The key observation is that  for every path $x_I$ in the decision tree for $f$ and for every 
$i \in I$, it holds that $i \in \supp{(m, \beta)}(f)$ and therefore $i \notin \supp{(m, \beta)}(g)$. Note that 
\begin{equation} \label{eq:g_res}
\big| \EE[g | y_I] - \EE[g] \big| = \big| \sum_{\emptyset \subsetneq S \subset I}  g_S(y_I) \big| \leq 2^{d} \max_{\emptyset \subsetneq S \subset I} |g_S(y_I)|,
\end{equation}
and therefore if the difference is of magnitude at least $\eta$ then there exists an $S$ such that $|g_s(y_I)| \geq 2^{-d}$ which in turn implies that
\[
\Var[g_S] = \EE[g_S^2] \geq 2^{-2d} \eta^2 \pi_{\ast}^d = 10^{-4} 2^{-2d} \eps^2 \pi_{\ast}^d. 
\]
However, the cross resilience condition implies that 
$\Var[g_S] \leq \beta^2$ and therefore for $\beta^2 = O(2^{-d} \eps \pi_{\ast}^{d})$, it holds that 
\[
|\EE[g | y_I] - \EE[g]| \leq 0.01 \eps,
\]
for all leaves of the decision tree. 

We now expand:
\[
\EE[f(x) g(y)] = \sum_{x_I,y_I} \PP[x_I] \PP[y_I | x_I] \EE[f(x) g(y) | x_I, y_I],
\] 
where $x_I$ denotes a random leaf of the decision tree and $y_I$ is chosen after $x_I$ to be a $\rho$-correlated 
version of $x_I$. Let $A$ denote the set of $x_I$ for which $f_{x_I}$ is $(m,\alpha)$-resilient. Then:
\begin{eqnarray*}
\EE[f(x) g(y)] &=& \sum_{x_I,y_I} \PP[x_I] \PP[y_I | x_I] \EE[f(x) g(y) | x_I, y_I] 
\\ &\leq& 0.01 \eps + 
\sum_{x_I \in A} \PP[x_I] \sum_{y_I} \PP[y_I | x_I] \EE[f(x) g(y) | x_I, y_I]. 
\end{eqnarray*}
Write $\mu' = \mu_g + 0.01\eps$ and $\mu(x_I) = \EE[f(x) | x_I]$. 

By~(\ref{eq:g_res}), it follows that for  $x_I \in A$ we can apply Theorem~\ref{thm:two_general}  to obtain that 
\[
\EE[f(x) g(y) | x_I, y_I] \leq \langle  \chi_{\mu(x_I)}, \chi_{\mu'} \rangle_{\rho} + 0.01 \eps.
\]
Plugging this back in we obtain the bound
\begin{eqnarray*}
\EE[f(x) g(y)] &\leq&  
0.02 \eps + \sum_{x_I \in A} \PP[x_I] \sum_{y_I} \PP[y_I | x_I]  \langle  \chi_{\mu(x_I)},  \mu' \rangle_{\rho}  \\ 
&\leq& 0.02 \eps + \sum_{x_I,y_I} \PP[x_I] \PP[y_I | x_I]   \langle  \chi_{\mu(x_I)},  \mu' \rangle_{\rho} \\
&=& 0.02 \eps +  \langle  \chi_{\mu'},  (\sum_{x_I,y_I} \PP[x_I] \PP[y_I | x_I] \chi_{\mu(x_I)}) \rangle_{\rho}.
\end{eqnarray*}
Note that $\psi = \sum_{x_I,y_I} \PP[x_I] \PP[y_I | x_I] \chi_{\mu(x_I)}$ is a $[0,1]$-valued function with
$\EE[\psi] = \EE[f]$. Thus by lemma~\ref{lem:Borell} it follows that 
\[
0.02 \eps +  \langle  \chi_{\mu'}, (\sum_{x_I,y_I} \PP[x_I] \PP[y_I | x_I] \chi_{\mu(x_I)}) \rangle_{\rho} \leq 
0.02 \eps + \langle  \chi_{\mu'},  \chi_{\mu_f} \rangle_{\rho} \leq 0.04 \eps +  \langle  \chi_{\mu_g},  \chi_{\mu_f} \rangle_{\rho},
\]
where the last inequality follows from lemma~\ref{lem:gaussian_continuous}. 
\end{proof}

%\begin{lemma}

%%%%%%%%%%%%%%%%%%%%%%%%%%%%%%%%%%%%%%%%%%%%%%%%%%%%%%%%%%%%%
\section{A Strengthening Of Arrow-Kalai Majority Theorem} \label{sec:arrow}
%%%%%%%%%%%%%%%%%%%%%%%%%%%%%%%%%%%%%%%%%%%%%%%%%%%%%%%%%%%%%%

A combination of Kalai's analysis of a probabilistic version of Arrow's Theorem and MIST implies the following: 

\begin{theorem}[ ~\cite{Kalai:02,MoOdOl:10}]\label{thm:kalai}
Consider $( (x_i,y_i,z_i) : 1 \leq i \leq n)$ i.i.d. drawn from the uniform distribution on 
$\{-1,1\}^3 \setminus \{ \pm (1,1,1) \}$. For every $\eps > 0$, there exists a $\tau > 0$ such that 
if $f,g,h : \{-1,1\}^n \to \{0,1\}$ satisfy $\EE[f] = \EE[g] = \EE[h] = 1/2$ and have all influences bounded above by $\tau$ then:
\begin{equation} \label{eq:kalai}
\PP[f(x) = g(y) = h(z)] \geq  3 \langle \chi_{\half}, 1-\chi_{\half} \rangle_{\third} -  \half + \eps. 
\end{equation}
%\frac{1}{4} (1- 3 \langle \chi_{\half}, \chi_{\half} \rangle_{\third}) + \eps.
% \end{equation} 
 % (2a-1)(2b-1) + ... = 1/4(1 + 4 a b - 2 a - 2 b + 1 ...) = 1 + a b + a c + b c - a - b - c 
\end{theorem}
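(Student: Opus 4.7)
The plan is to combine a simple algebraic identity for the ``paradox'' probability $\PP[f(x){=}g(y){=}h(z)]$ with the two-function MIST bound (Theorem~\ref{thm:MIST}), applied once to each of the three pairs of correlated inputs.

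First I would pass to the $\pm 1$ encoding $F = 2f - 1$, $G = 2g - 1$, $H = 2h - 1$, which are mean zero and have the same influences as $f, g, h$. A counting argument---three $\pm 1$ values are either all equal (three pairwise agreements) or split $2$--$1$ (one pairwise agreement)---gives the identity
\[
\PP[F(x){=}G(y){=}H(z)] \;=\; \tfrac{1}{2}\bigl(\PP[F{=}G] + \PP[G{=}H] + \PP[F{=}H] - 1 \bigr).
\]
Since $\PP[F{=}G] = (1 + \EE[FG])/2 = 2\,\EE[f(x)g(y)]$ (using $\EE[F]=\EE[G]=0$), this rewrites as
\[
\PP[f(x){=}g(y){=}h(z)] \;=\; \EE[f(x)g(y)] + \EE[g(y)h(z)] + \EE[f(x)h(z)] - \tfrac{1}{2}.
\]

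Next I would compute the pairwise correlations of the distribution on $\{-1,1\}^3\setminus\{\pm(1,1,1)\}$: enumerating the six equally likely vectors gives $\EE[x_iy_i]=\EE[y_iz_i]=\EE[x_iz_i]=-\tfrac{1}{3}$. Since Theorem~\ref{thm:MIST} is stated for $\rho \geq 0$, I would apply the standard reflection trick. Writing $\tilde g(z):=g(-z)$, which has the same mean and the same influences as $g$, and noting that $(x,-y)$ is $(+\tfrac{1}{3})$-correlated when $(x,y)$ is $(-\tfrac{1}{3})$-correlated, I get
\[
\EE[(1-f(x))\,g(y)]_{\rho=-1/3} \;=\; \EE[(1-f(x))\,\tilde g(y')]_{\rho=+1/3}.
\]
Applying Theorem~\ref{thm:MIST} with $\rho = 1/3$ and error $\eps/3$ (choosing $\tau$ accordingly, since all three functions have all influences at most $\tau$) yields
\[
\EE[(1-f(x))\,g(y)]_{\rho=-1/3} \;\leq\; \langle \chi_{1/2},\chi_{1/2}\rangle_{1/3} + \tfrac{\eps}{3},
\]
which rearranges to $\EE[f(x)g(y)]_{\rho=-1/3} \geq \langle \chi_{1/2},1-\chi_{1/2}\rangle_{1/3} - \tfrac{\eps}{3}$. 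Applying the same argument to the pairs $(y,z)$ and $(x,z)$ and substituting into the identity above gives~(\ref{eq:kalai}).

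The main obstacle is purely bookkeeping: tracking signs so that $\{0,1\}$-valued functions and negative pairwise correlation are handled by invoking Theorem~\ref{thm:MIST} in its ``one-sided'' direction (applied to $1-f$ rather than $f$) against the reflected partner function, and choosing $\tau$ small enough that three error contributions of size $\eps/3$ sum to the claimed $\eps$. No new analytic input beyond Theorem~\ref{thm:MIST} is required; the content of the theorem is entirely in the algebraic reduction of the triple-agreement probability to the three pairwise noisy inner products.
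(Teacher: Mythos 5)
Your proof is correct and follows essentially the same route as the paper: the same pairwise-correlation identity $\PP[f=g=h] = \EE[fg]+\EE[gh]+\EE[hf]-\tfrac12$, the same reflection $g'(x)=g(-x)$ to convert $\rho=-\tfrac13$ to $\rho=+\tfrac13$, and the same passage to complements (the paper invokes a ``lower-bound version'' of MIST, which is exactly what your application of Theorem~\ref{thm:MIST} to $1-f$ yields). One small note: the $+\eps$ in~(\ref{eq:kalai}) is a sign typo in the paper; your derivation correctly gives $-\eps$, which is the only direction consistent with Majority achieving equality in the limit.
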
 
The interpretation in terms of voting is that $(x_i,y,_i,z_i)$ represents voter $i$ pairwise preferences between 
alternative A vs. B, B vs. C and C vs. A. Since the voter is assumed to be rational the preferences $(1,1,1)$ and 
$-(1,1,1)$ are not feasible. If voter $i$ votes at random, then its preferences are uniformly distributed among the remaining possible preference profiles. The function $f$ aggregates the individual A and B preferences and results in 
the societal preference between A and B (similarly for $g$ and $h$ ; for presentation purposes it is useful to encode these preferences using $0,1$ rather than $1,-1$). The expression $\PP[f(x) = g(y) = h(z)]$ is the probability that the outcome of the vote does not correspond to a ranking of $A,B$ and $C$. This is called a paradoxical outcome. 
The right hand side of equation~(\ref{eq:kalai}) is the asymptotic probability 
that $\PP[f(x) = g(y) = h(z)]$ where $f = g = h = \chi_{\half}(n^{-\half} \sum_{i=1}^n x_i)$ are all given by the same Majority function. 
 
Given our result we can prove the following strengthening of the result 
\begin{theorem}\label{thm:kalai2}
Consider $( (x_i,y_i,z_i) : 1 \leq i \leq n)$ i.i.d. drawn from the uniform distribution on 
$\{-1,1\}^3 \setminus \{ \pm (1,1,1) \}$. For every $\eps > 0$, there exists a $m, \beta > 0$ such that 
if $f,g,h : \{-1,1\}^n \to \{-1,1\}$ satisfy $\EE[f] = \EE[g] = \EE[h] = 0$ and each pair of functions among $f,g,h$ is 
$(m,\beta)$-cross-resilient then 
\[
\PP[f(x) = g(y) = h(z)] \geq  3 \langle \chi_{\half}, 1-\chi_{\half} \rangle_{\third} -  \half + \eps.
\]
\end{theorem}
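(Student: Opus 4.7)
The plan is to parallel Kalai's derivation of Theorem~\ref{thm:kalai} while replacing the two-function Majority Is Stablest Theorem (Theorem~\ref{thm:MIST}) with our cross-resilient two-function bound (Theorem~\ref{thm:three}). The structural reason this goes through is that Kalai's identity decomposes the paradox probability into a sum of pairwise noise correlations, so only pairwise structural hypotheses on $f,g,h$ are required -- precisely the $(m,\beta)$-cross-resilience hypothesis of Theorem~\ref{thm:kalai2}.

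First I would record Kalai's identity: for $a,b,c \in \{-1,1\}$, $\1[a=b=c] = \tfrac{1}{4}(1 + ab + ac + bc)$. Under the rational-voter distribution each pair of coordinate variables is $-\third$-correlated with uniform $\{-1,1\}$ marginals, so (using $\EE[f]=\EE[g]=\EE[h]=0$) taking expectations gives
\[ \PP[f(x)=g(y)=h(z)] = \tfrac{1}{4}\bigl(1 + \langle f,g\rangle_{-1/3} + \langle f,h\rangle_{-1/3} + \langle g,h\rangle_{-1/3}\bigr).\]
Setting $f=g=h=\text{Maj}$ and passing to the Gaussian limit recovers $3\langle \chi_{\half}, 1-\chi_{\half}\rangle_{\third} - \half$, so the goal reduces to a uniform lower bound on each of the three pairwise correlations at $\rho = -\third$.

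Next I would reduce to the positive-correlation regime covered by Theorem~\ref{thm:three}. For one pair, say $(f,g)$, define $g''(y) := g(-y)$; the change of variables $y \mapsto -y$ gives $\langle f, g\rangle_{-1/3} = \langle f, g''\rangle_{+1/3}$, and negating $f$ flips the sign, so $\langle -f, g''\rangle_{+1/3} = -\langle f, g\rangle_{-1/3}$. An upper bound on the left-hand side (which is what a MIST-type statement provides) is therefore a lower bound on $\langle f, g\rangle_{-1/3}$. Passing to the $[0,1]$-valued representatives $(1-f)/2$ and $(1+g'')/2$ (each of mean $\half$) and applying Theorem~\ref{thm:three} at $\rho = \third$ with error $\eps/12$ yields
\[ \tfrac{1 - \langle f, g\rangle_{-1/3}}{4} \;=\; \langle (1-f)/2,\; (1+g'')/2 \rangle_{1/3} \;\leq\; \langle \chi_{\half}, \chi_{\half}\rangle_{\third} + \eps/12,\]
which rearranges (via $\langle \chi_{\half}, 1-\chi_{\half}\rangle_{\third} = \half - \langle \chi_{\half}, \chi_{\half}\rangle_{\third}$) to $\langle f, g\rangle_{-1/3} \geq 4\langle \chi_{\half}, 1-\chi_{\half}\rangle_{\third} - 1 - \eps/3$. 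Summing over the three pairs and substituting into Kalai's identity yields the stated bound.

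The main obstacle, beyond routine bookkeeping, is verifying that $(m,\beta)$-cross-resilience of $(f,g)$ carries over to the transformed pair under both the sign-flips $f \mapsto -f$, $g \mapsto g''$ and the affine rescaling to $[0,1]$. In the uniform-Boolean setting $\Var[f_S] = \hat{f}(S)^2$, and the transformations in question either preserve $|\hat{f}(S)|$ (the sign-flips) or halve it (the affine rescaling), so $(m,\beta)$-cross-resilience of $(f,g)$ translates to $(m,\beta/2)$-cross-resilience of the transformed $[0,1]$-valued pair. Consequently the values of $m,\beta$ that suffice for Theorem~\ref{thm:kalai2} are, up to a factor of $2$ in $\beta$, those provided by Theorem~\ref{thm:three} at error $\eps/12$ and correlation $\third$.
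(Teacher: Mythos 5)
Your proposal is correct and takes essentially the same route as the paper: decompose the paradox probability into the three pairwise noisy inner products via Kalai's identity, flip the argument of one function in each pair to pass from correlation $-\third$ to $+\third$, and invoke a lower-bound form of the cross-resilient two-function bound (Theorem~\ref{thm:three}). The paper simply asserts that ``the lower bound proof is identical to the proof of the upper bound'' and works with the $\{0,1\}$-encoded identity, whereas you derive the lower bound explicitly by applying the upper bound of Theorem~\ref{thm:three} to $(1-f)/2$ and $(1+g(-\cdot))/2$ and verify that the $(m,\beta)$-cross-resilience hypothesis survives those sign-flips and the affine rescaling to $[0,1]$ (up to a benign factor of $2$ in $\beta$) -- a cleaner and self-contained treatment, but not a different method. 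One small matter of bookkeeping: the theorem as printed ends with ``$+\,\eps$,'' which must be ``$-\,\eps$'' (as in the companion Theorem~\ref{thm:kalai}); both the paper's sketch and your derivation in fact establish the ``$-\,\eps$'' bound, with your constants yielding $-\eps/4$.
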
 

While Theorem~\ref{thm:kalai} requires that no voter can have a noticeable effect on any pairwise preference given its knowledge of all other votes casted, Theorem~\ref{thm:kalai2} requires much less. For the theorem not to hold, at least two of the functions, let us say $f,g$ have to be correlated with function $f', g'$ of a bounded ($m$) number of voters.
Moreover, there has to exist a voter $i$ that effects both the outcome of $f'$ and in $g'$.  

The proof of Theorem~\ref{thm:kalai2} is identical to the proof of Theorem~\ref{thm:kalai}. 
Here we need a lower bound version of Theorem~\ref{thm:three} saying that under the conditions of the theorem
\begin{equation} \label{eq:borell_lower}
\langle f, g \rangle_{\rho} \geq \langle \chi_{\mu_f}, 1 -\chi_{1-\mu_g} \rangle_{\rho} + \eps. 
\end{equation}
The lower bound proof is identical to the proof of the upper bound. 
Then we write: 
\begin{eqnarray*}
\PP[f(x) = g(y) = h(z)] &=&1 + \EE[f(x) g(y)] + \EE[g(y) h(z)] + \EE[h(z) f(x)] - \EE[f(x)] - \EE[g(y)] - \EE[(h(z)] \\ 
 &=&   \langle f, g \rangle_{-\third} + \langle g, h \rangle_{-\third} + \langle h, f \rangle_{-\third} - \half.
\end{eqnarray*} 
If $g'(x) = g(-x)$ then $\langle f, g \rangle_{-\third} = \langle f, g' \rangle_{\third}$ and the proof follows from~(\ref{eq:borell_lower}).

\bibliographystyle{abbrv}
\bibliography{all,my}

\end{document}